\def\<{\langle}
\def\>{\rangle}
\numberwithin{equation}{section}
\def\cM{{\cal M}}
\def\fM{{\mathfrak M}}
\def\Po{{\mathbb P^1}}
\newcommand{\CC}{\mathbb{C}}
\newcommand{\RR}{\mathbb{R}}
\newcommand{\ZZ}{\mathbb{Z}}
\def\Z{\mathcal{Z}}
\def\I{\mathcal{I}}
\def\sO{{\mathscr O}}
\newcommand{\cal}{\mathcal}
\def\cM{{\cal M}}
\def\cO{{\cal O}}
\def\sO{{\mathscr O}}
\def\fM{\mathfrak{M}}
\def\mapright#1{\,\smash{\mathop{\lra}\limits^{#1}}\,}
\def\lra{\longrightarrow}
\def\begeq{\begin{equation}}
\def\endeq{\end{equation}}
\def\and{\quad{\rm and}\quad}
\numberwithin{equation}{section}
\numberwithin{equation}{section}
\def\Po{{\mathbb P^1}}
\def\and{\quad\text{and}\quad}
\def\mapright#1{\,\smash{\mathop{\lra}\limits^{#1}}\,}
\newtheorem{prop}{Proposition}[section]
\newtheorem{lemm}[prop]{Lemma}
\newtheorem{coro}[prop]{Corollary}
\newtheorem{defi}[prop]{Definition}
\newtheorem{defiprop}[prop]{Definition-Proposition}
\def\sO{{\mathscr O}}
\def\Ob{\cO b}
\let\lab=\label
\title{On Donaldson Thomas invariants of $\Po$ scroll}
\author{Huai-Liang, Chang}
\begin{document}
\maketitle
\begin{abstract}
  Let $S$ be a smooth algebraic surface and let $L$ be a line bundle on $S$. Suppose $\sigma$ is a
  holomorphic two-form on $S$ with smooth degeneracy loci $C$.
  Consider the Donaldson-Thomas invariant (\cite{MNOP2}) of
  $X=P(L\oplus \sO_S)$ with prime field insertions.
  We show that $\sigma$ localizes the virtual fundamental class of the moduli of ideal sheaves
  $I_n(X,\beta)$ to $D=P(L|_C\oplus \sO_C)$. When $X$ is proper, insertions lie in $D$
  and $L=\sO_S(nC)$ for some $n$, one can define the \emph{localized DT-invariants}. Compared
  to the GW case by \cite{tom} and \cite{Jli}, it gives an evidence of M.N.O.P. conjectures on identifying
  the GW and the DT theories. It is shown to be deformation invariant and depends only on the topology
  of $X$, namely genus of $C$, the theta characteristic and the degree of $L$.\\
\end{abstract}

\section{\bf Introduction}

    In \cite{MNOP2}, D. Maulik, N. Nekrasov, A. Okounkov, R. Pandharipande raised several conjectures about DT invariants.
    Let $X$ be an arbitrary smooth projective three-fold. Consider the special case of moduli of stable sheaves, $I_n(X,\beta)$,
    which is the moduli space of ideal sheaves that correspond to dimension one subschemes  $Z$ in $X$ where $[Z]=\beta \in H_2(X)$ and
    $\chi(\sO_Z)=n$. The DT-invariants were defined by intersecting descendant classes $\gamma_i$ with the virtual
    cycle $[{I}_n(X,\beta)]^{vir}$. Associated to this collection of invariants, one can form a generating function:

\begin{eqnarray*}
Z_{DT}(X;q| \prod^r_{i=1} \tau_{k_i}(\gamma_i))_{\beta}=
\sum_{n\in Z}\int_{[{I}_n(X,\beta)]^{vir}}\prod^r_{i=1}\tau_{k_i}(\gamma_i) q^n,\\
\end{eqnarray*}
and its reduced version by quotient out $Z_{DT}(X;q)_0$:\\
\begin{eqnarray*}
Z^{\prime}_{DT}(X,q | \prod _{i=1}^r \tau_{k_i}(\gamma_i))_{\beta}=Z_{DT}
(X;q| \prod^r_{i=1} \tau_{k_i}(\gamma_i))_{\beta}/
Z_{DT}(X;q)_{0}.
\end{eqnarray*}

Motivated by Gauge/String duality principle, Maulik, Nekresev, Okounkov and Pandiharipande proposed three
conjectures on these invariants and their relations to the reduced GW
series $Z^{\prime}_{GW}(X,\cdot)$:\\

\vskip5pt

\noindent\textbf{Conjecture 1}. {\sl $Z_{DT}(X,q)_0=M(-q)^{\int_X
c_3(T_x\otimes K_X)}.$}\\

\vskip5pt \noindent\textbf{Conjecture 2.} {\sl The series
$Z^{\prime}_{DT}(X,q | \prod _{i=1}^r
\tau_{k_i}(\gamma_i)_{\beta})$ is rational in $q$.}\\

\vskip5pt \noindent\textbf{Conjecture 3.} {\sl For $d=\int_{\beta}
c_1(T_x)$ and $q=-e^{iu}$
\begin{eqnarray*}
(-iu)^d Z^{\prime}_{GW}(X,u| \prod_{i=1}^r
\tau_0(\gamma_i))_{\beta} =(-q)^{-d/2}Z^{\prime}_{DT}(X,q|
\prod_{i=1}^r \tau_0(\gamma_i))_{\beta}.
\end{eqnarray*}}\\

J. Li \cite{Jli1} and M. Levine with R. Pandharipande \cite{AC}
solved the first part of the MNOP conjectures. While J. Li's
approach relies on analyzing the local behavior of the moduli space,
M. Levine and R. Pandharipande proved the first conjecture using
algebraic cobordism theory. They degenerate all three-folds into
union of toric three-folds and then applying the degeneration
formula for DT-invariants \cite{Deg}.\\

 The second conjecture and the third conjecture are under attack from various directions.
One possible approach, as proposed by R.Pandharipande, is to apply algebraic cobordism theory and the
degeneration formula of GW/DT invariants. Like the case for GW theory, B. Wu and J. Li \cite{Deg} have recently
proved the degeneration formula for DT-invariants. Since a three-fold can always be degenerated to a product of
projective spaces, and for the later one can compute their GW and DT invariants by virtual localization, an
essential part to establish conjecture 3 is to investigate the DT-invariants of all three-folds that appear in
the degeneration scheme. Specifically, when we degenerate $X$ to a union of $X_1$ and $X_2$ along a divisor D,
the GW or DT-invariants of $X$ can be re-constructed by the relative invariants of the pairs $X_1/D$ and $X_2/D$,
according to the degeneration formula. Using the standard degeneration, the relative invariants of any pair
$X_i/D$ is determined by the absolute invariants of $X_i$ and the relative invariants of a $\Po$ scroll over $D$.
Since one can use induction to further degenerate individual $X_i$, the problem essentially reduces to
determining the relative invariants of a $\Po$-bundle over a surface relative to a divisor that is a section
of this $\Po$-bundle.\\

 This work aims at determining the structure of the DT-invariants of a $\Po$ scroll over a surface of general type. We derive a vanishing theorem and a localization principle for DT-invariants of $\Po$ scroll. In view of the second conjecture, our result is parallel to the theta-localization for Gromov Witten theory on $p_g>0$ surfaces given by \cite{tom} and \cite{Jli}.\\

 The author thanks Jun Li for helpful conversations and his explanation of the two-form localization. The author also thanks Y-H. Kiem for discussion about \cite{Jli} and the derived approach. The author specially appreciates discussion with Brian Conrad on the use of Grothendieck duality in lemma (\ref{can}).

\section{\bf The obstruction sheaf and a vanishing criterion}
 \,\,\,\, We give a reinterpretation of the obstruction
 sheaves of Donaldosn-Thomas moduli spaces. It
 behaves better in Serre duality for all three-folds than just Calabi Yau three-folds.
 The second part of this section is the cosection lemma of J. Li and Y.H. Kiem (\cite{Jli}). The lemma
  plays a crucial role in our proof of the vanishing theorem.
\subsection{Obstruction sheaf in Donaldosn Thomas theory}
 \,\,\,\,Let $X$ is an arbitrary projective three-fold. Let
 $\eta: \Z\rightarrow I_n(X,\beta)$ be the universal family
 of subschemes in $X$ where $\Z$ is a subscheme of $Y:=I_n(X,\beta)\times X$
 with ideal sheaf  $\mathcal{I_Z}$.   Denote the obstruction sheaf on $I_n(X,\beta)$
 by $Ob$ and the projection $I_n(X,\beta)\times X \rightarrow I_n(X,\beta)$ by $\pi$.
 Recall the obstruction sheaf is $\Ob=Ext^{2}_{\pi}(I, I)_0$. We will show
  $Ob\cong Ext^3_{\pi}(O_{X\times S}/\mathcal{I_Z},\mathcal{I_Z})$.\\
 
  From the exact sequence
\begin{eqnarray*}
0\longrightarrow \mathcal{I_Z} \longrightarrow \sO_{Y}\longrightarrow \mathcal{O_Z}
\longrightarrow 0,
\end{eqnarray*}
 there is a long exact sequence of relative extension sheaves\\
\begin{eqnarray*}
& &\longrightarrow Ext^2_{\pi}(\sO_{Y},\mathcal{I_Z})\stackrel{f_2}\longrightarrow Ext^2_{\pi}(\mathcal{I_Z},\mathcal{I_Z})\stackrel{g}\longrightarrow Ext^3_{\pi}(\mathcal{O_Z},\mathcal{I_Z})\\
& &\longrightarrow Ext^3_{\pi}(\sO_{Y},\mathcal{I_Z})\stackrel{f_3}\longrightarrow
Ext^3_{\pi}(\mathcal{I_Z},\mathcal{I_Z})\longrightarrow Ext_{\pi}^4(\mathcal{O_Z},\mathcal{I_Z})=0.
\end{eqnarray*}\\
 The last equality follows from $dimX=3$ and base change theorem. \,\,From the inclusion $\mathcal{I_Z} \subset \sO_{Y}$ there are maps
 \begin{eqnarray*}
\rho_i:Ext^i_{\pi}(\sO_{Y},\mathcal{I_Z})\longrightarrow Ext^i_{\pi}(\sO_{Y},\sO_{Y}), \,\,\,\,\,\, i=2 , 3.
\end{eqnarray*}
 Let ${\iota_i}$ be the canonical map $Ext^i_{\pi}(\sO_{Y},\sO_{Y})\rightarrow Ext^i_{\pi}(\mathcal{I_Z},\mathcal{I_Z})$, then $\iota_i$ splits the trace map $Ext^i_{\pi}(\mathcal{I_Z},\mathcal{I_Z})\rightarrow Ext^i_{\pi}(\sO_{Y},\sO_{Y})$(ref \cite {Moduli}). 
\begin{lemm}
$\iota_i \circ \rho_i=f_i$
\end{lemm}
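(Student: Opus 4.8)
The plan is to recognise that $f_i$, $\rho_i$ and $\iota_i$ are all governed by the single inclusion $j\mh\mathcal{I_Z}\hookrightarrow\sO_Y$, and that the asserted identity is exactly the compatibility of the canonical trace splitting with this inclusion. Working with the relative derived functor $Ext^\bullet_\pi(-,-)=\mathbf{R}^\bullet\pi_\ast\mathbf{R}\Hom(-,-)$, the map $f_i$ is precomposition with $j$ in the first argument, $\rho_i$ is postcomposition with $j$ in the second argument, and $\iota_i$ is the splitting of the trace built in \cite{Moduli}, which on classes is the action of $Ext^\bullet_\pi(\sO_Y,\sO_Y)=\mathbf{R}^\bullet\pi_\ast\sO_Y$ by cup product with $\id_{\mathcal{I_Z}}$ (so that $\tr\circ\iota_i=\id$, since $\mathcal{I_Z}$ has rank one).

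First I would introduce the auxiliary sheaf $Ext^i_\pi(\mathcal{I_Z},\sO_Y)$ together with the two maps induced by $j$: precomposition $\iota_i'\mh Ext^i_\pi(\sO_Y,\sO_Y)\to Ext^i_\pi(\mathcal{I_Z},\sO_Y)$ and postcomposition $\rho_i'\mh Ext^i_\pi(\mathcal{I_Z},\mathcal{I_Z})\to Ext^i_\pi(\mathcal{I_Z},\sO_Y)$. Bifunctoriality of $Ext_\pi$, namely that precomposing and postcomposing with $j$ commute (both equal $\mathbf{R}\Hom(j,j)$), yields the commuting square
\[
\iota_i'\circ\rho_i=\rho_i'\circ f_i .
\]
Next I would verify the naturality identity $\rho_i'\circ\iota_i=\iota_i'$: because $j$ is $\sO_Y$-linear, cup product with a class $\gamma\in\mathbf{R}^\bullet\pi_\ast\sO_Y$ commutes with $j$, i.e. the square with vertical arrows $j$ and horizontal arrows ``multiply by $\gamma$'' commutes. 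Combining the two displays gives
\[
\rho_i'\circ(f_i-\iota_i\circ\rho_i)=0 .
\]

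The hard point is that one cannot finish by cancelling $\rho_i'$. Applying $Ext^\bullet_\pi(\mathcal{I_Z},-)$ to $0\to\mathcal{I_Z}\to\sO_Y\to\mathcal{O_Z}\to0$ shows that the kernel of $\rho_i'=j_\ast$ is the image of the connecting map out of $Ext^{i-1}_\pi(\mathcal{I_Z},\mathcal{O_Z})$, which need not vanish for $i=2,3$. So I must show that the difference $f_i-\iota_i\circ\rho_i$ vanishes on the nose, not merely that it is annihilated by $\rho_i'$. For this I would pass to a local (\v Cech or Yoneda) model and use that $\sO_Y$ is the unit of $\mathbf{R}\Hom$, so that a class $\alpha\in Ext^i_\pi(\sO_Y,\mathcal{I_Z})=\mathbf{R}^i\pi_\ast\mathcal{I_Z}$ is a section-valued cohomology class; then $f_i(\alpha)=\alpha\circ j$ sends a local section $x$ of $\mathcal{I_Z}$ to $j(x)\cup\alpha=x\cup\alpha$ by $\sO_Y$-linearity of $\alpha$, while $\iota_i(\rho_i(\alpha))$ is cup product with the class $\rho_i(\alpha)=j\circ\alpha$, acting by $x\mapsto x\cup\alpha$ as well. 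The two agree by commutativity of cup product with a class pulled back from the structure sheaf, and this commutativity is precisely the content of the lemma.

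I expect this last step — upgrading $\rho_i'\circ(f_i-\iota_i\circ\rho_i)=0$ to the genuine equality $\iota_i\circ\rho_i=f_i$ — to be the main obstacle, since it is the one place where the abstract bifunctorial formalism is insufficient and one must invoke the explicit identity-splitting description of $\iota_i$ together with commutativity of the $\mathbf{R}^\bullet\pi_\ast\sO_Y$-module structure. The remaining inputs, the commuting square, the naturality identity $\rho_i'\circ\iota_i=\iota_i'$, and $\tr\circ\iota_i=\id$, are routine.
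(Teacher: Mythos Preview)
Your approach is different from the paper's and, once the final step is made precise, it works. The paper argues via the two local-to-global spectral sequences $H^p_\pi(\mathcal{E}xt^q(\sO_Y,\mathcal{I_Z}))\Rightarrow Ext^{p+q}_\pi(\sO_Y,\mathcal{I_Z})$ and $H^p_\pi(\mathcal{E}xt^q(\mathcal{I_Z},\mathcal{I_Z}))\Rightarrow Ext^{p+q}_\pi(\mathcal{I_Z},\mathcal{I_Z})$ together with the map between them induced by $j$: the first degenerates since $\sO_Y$ is free, and the key input is the sheaf identity $\mathcal{E}xt^0(\mathcal{I_Z},\mathcal{I_Z})=\sO_Y$, which identifies the edge term $\overline{E}_\infty^{i,0}$ with $H^i_\pi(\sO_Y)$ and exhibits $\iota_i$ as the edge map. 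The lemma then falls out of the compatibility of edge maps under the morphism of spectral sequences. Your route bypasses spectral sequences entirely and instead unpacks $\iota_i$ as ``cup with $\id_{\mathcal{I_Z}}$''. This is more direct, and it rests on the same underlying fact (rank one) but packaged as $\sO_Y$-linearity rather than as a sheaf computation.

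Two comments. First, your bifunctoriality detour is logically redundant: once the direct computation of your third paragraph succeeds, it proves $\iota_i\circ\rho_i=f_i$ outright, and the relation $\rho_i'\circ(f_i-\iota_i\circ\rho_i)=0$ is never used. Second, the phrase ``$j(x)\cup\alpha=x\cup\alpha$'' is the one genuinely unclear point, and you are right to flag it, but the obstacle dissolves once the notation is unpacked. Represent $\alpha\in R^i\pi_\ast\mathcal{I_Z}$ by a \v Cech cocycle $(a_{\underline U})$ with values in $\mathcal{I_Z}$. Then $f_i(\alpha)=\alpha\circ j$, viewed as a morphism $\mathcal{I_Z}\to\mathcal{I_Z}[i]$, sends a local section $x\in\mathcal{I_Z}$ to the cocycle $\bigl(j(x)\cdot a_{\underline U}\bigr)$, where $j(x)\in\sO_Y$ multiplies $a_{\underline U}\in\mathcal{I_Z}$. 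On the other side $\rho_i(\alpha)\in R^i\pi_\ast\sO_Y$ is the cocycle $\bigl(j(a_{\underline U})\bigr)$, and $\iota_i(\rho_i(\alpha))$ sends $x\in\mathcal{I_Z}$ to $\bigl(x\cdot j(a_{\underline U})\bigr)$, where now $x\in\mathcal{I_Z}$ multiplies $j(a_{\underline U})\in\sO_Y$. Since $\mathcal{I_Z}\subset\sO_Y$ is an ideal, both products are the ambient ring multiplication and both land in $\mathcal{I_Z}$; hence the two cocycles agree on the nose. That is the entire argument, and no further appeal to commutativity of module structures is needed.
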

\begin{proof} Recall there are two local to global spectral sequences and a morphism between them induced from the map $\mathcal{I_Z}\longrightarrow \sO_Y$:
\begin{eqnarray*}
E_2^{p,q}=&H^p_{\pi}(\mathcal{E}xt^q(\sO_Y,\mathcal{I_Z}))&\Rightarrow Ex^{p+q}_{\pi}(\sO_Y,\mathcal{I_Z})\\
&\downarrow&     \,\,\,\,\,\,\,\,\,\,\,\,\,\,\,\,\,\,\, \downarrow\\
\overline{E_2}^{p,q}=&H^p_{\pi}(\mathcal{E}xt^q(\mathcal{I_Z},\mathcal{I_Z}))&\Rightarrow Ext^{p+q}_{\pi}(\mathcal{I_Z},\mathcal{I_Z})\\
\end{eqnarray*}
 Consider the map $E_{\infty}^{i,0}\longrightarrow Ext^i_{\pi}(\cdot,\mathcal{I_Z})$ in the above
diagram.
\begin{eqnarray*}
&E_{\infty}^{i,0}&\longrightarrow Ext^i_{\pi}(\sO_Y,\mathcal{I_Z})\\
&\downarrow& \,\,\,\,\,\,\,\,\,\,\,\,\,\,\,\,\,\,\, \downarrow\\
&\overline{E}_{\infty}^{i,0}&\longrightarrow Ext^i_{\pi}(\mathcal{I_Z},\mathcal{I_Z})
\end{eqnarray*}
 Since the first spectral sequence degenerates at $E_2$ term, there is 
$$E_{\infty}^{i,0}\cong H^i_{\pi}(\mathcal{E}xt^0(\sO_Y,\mathcal{I_Z}))=H^i_{\pi}(\mathcal{I_Z})$$
and the map from $E_{\infty}^{i,0}\to Ext^i_{\pi}(\sO_Y,\I_Z)$ is an isomorphism. From deformation theory the natural map
\begin{eqnarray*}
 H^2_{\pi}(\sO_Y) \longrightarrow \overline{E}_2^{i,0}=H^i_{\pi}(\mathcal{E}xt^0(\mathcal{I_Z},\mathcal{I_Z}))
\end{eqnarray*}
  lifts to a map $H^2_{\pi}(\sO_Y) \longrightarrow \overline{E}^{i,0}_{\infty}$
and its composition with the map  $\overline{E}^{i,0}_{\infty}\longrightarrow
Ext^{i}_{\pi}(\mathcal{I_Z},\mathcal{I_Z})$ is the same as the $\iota_i$. In our case
\begin{eqnarray*}
H^2_{\pi}(\sO_Y) \longrightarrow \overline{E}_2^{i,0}=H^i_{\pi}(\mathcal{E}xt^0(\mathcal{I_Z},\mathcal{I_Z}))
\end{eqnarray*}
is indeed an isomorphism since $\,\,\mathcal{E}xt^0_{\pi}(\mathcal{I_Z},\mathcal{I_Z})=\sO_Y\,$. So one
concludes
 \begin{eqnarray*}
\overline{E}^{i,0}_{\infty}=\overline{E}^{i,0}_2=H^i_{\pi}(\mathcal{E}xt^0(\mathcal{I_Z},\mathcal{I_Z})),
\end{eqnarray*}
and the following diagram commute:
\begin{eqnarray*}
E_{\infty}^{i,0}=&H^i_{\pi}(\mathcal{I_Z})&= Ext^i_{\pi}(\sO_Y,\mathcal{I_Z})\\
&\downarrow\rho_i& \,\,\,\,\,\,\,\,\,\,\,\,\,\,\,\,\,\,\, \downarrow f_i\\
\overline{E}_{\infty}^{i,0}=&H^i_{\pi}(\mathcal{O_Z})&\mapright{\iota_i} Ext^i_{\pi}(\mathcal{I_Z},\mathcal{I_Z}).
\end{eqnarray*}
This proves the lemma.
\end{proof}

\begin{lemm}\lab{iso}{There is a canonical isomorphism $Ob\mapright{\thicksim} Ext^3_{\pi}(O_\Z,\mathcal{I_Z})$.}
 \end{lemm}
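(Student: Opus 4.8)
The plan is to read the isomorphism directly off the long exact sequence of relative $Ext$-sheaves displayed above, using the preceding lemma (which identifies $f_i=\iota_i\circ\rho_i$) to control $f_2,f_3$ in terms of $\rho_2,\rho_3$, and a fibre-dimension count to pin down the $\rho_i$. Write $g\colon Ext^2_\pi(\mathcal{I_Z},\mathcal{I_Z})\to Ext^3_\pi(\mathcal{O_Z},\mathcal{I_Z})$ for the connecting map and $\delta$ for the following arrow $Ext^3_\pi(\mathcal{O_Z},\mathcal{I_Z})\to Ext^3_\pi(\sO_Y,\mathcal{I_Z})$. I claim that $g$ annihilates the trace summand $\image(\iota_2)$ and restricts to an isomorphism on the trace-free summand $Ob=Ext^2_\pi(\mathcal{I_Z},\mathcal{I_Z})_0=\ker(\tr_2)$.

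First I would record, exactly as in the proof of the preceding lemma, that since $\mathcal{E}xt^q(\sO_Y,-)$ vanishes for $q>0$ the local-to-global spectral sequence degenerates and gives $Ext^i_\pi(\sO_Y,\mathcal F)=H^i_\pi(\mathcal F)$ for every $\mathcal F$. Applying $H^\bullet_\pi$ to $0\to\mathcal{I_Z}\to\sO_Y\to\mathcal{O_Z}\to0$ turns each $\rho_i$ into the natural map $H^i_\pi(\mathcal{I_Z})\to H^i_\pi(\sO_Y)$ sitting in
\[
\cdots\to H^i_\pi(\mathcal{I_Z})\xrightarrow{\rho_i}H^i_\pi(\sO_Y)\to H^i_\pi(\mathcal{O_Z})\to H^{i+1}_\pi(\mathcal{I_Z})\to\cdots.
\]
The key input is that the universal family $\Z\to I_n(X,\beta)$ is flat with one-dimensional fibres, so $\mathcal{O_Z}$ restricts fibrewise to a sheaf on a curve; hence $H^i(\mathcal{O_Z})=0$ fibrewise for $i\ge 2$, and cohomology-and-base-change yields $H^i_\pi(\mathcal{O_Z})=0$ for $i\ge2$. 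Feeding this in, the vanishing of $H^2_\pi(\mathcal{O_Z})$ forces $\rho_2$ to be surjective, while the simultaneous vanishing of $H^2_\pi(\mathcal{O_Z})$ and $H^3_\pi(\mathcal{O_Z})$ forces $\rho_3$ to be an isomorphism.

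Now I would assemble the conclusion. By the preceding lemma $f_i=\iota_i\circ\rho_i$, and by \cite{Moduli} the map $\iota_i$ is a split injection with $Ext^i_\pi(\mathcal{I_Z},\mathcal{I_Z})=\image(\iota_i)\oplus\ker(\tr_i)$, the trace-free summand $\ker(\tr_i)$ being $Ext^i_\pi(\mathcal{I_Z},\mathcal{I_Z})_0$. Since $\rho_2$ is surjective, $\image(f_2)=\image(\iota_2\circ\rho_2)=\image(\iota_2)$, so $\ker(g)=\image(f_2)=\image(\iota_2)$, which is complementary to $Ob=\ker(\tr_2)$; thus $g|_{Ob}$ is injective. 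Since $\rho_3$ and $\iota_3$ are injective, $f_3$ is injective, hence $\image(\delta)=\ker(f_3)=0$, so $\delta=0$ and $g$ is surjective; as $g$ kills $\image(\iota_2)$ it follows that $g|_{Ob}$ is surjective as well. Therefore $g$ induces the asserted canonical isomorphism $Ob\xrightarrow{\thicksim}Ext^3_\pi(\mathcal{O_Z},\mathcal{I_Z})$.

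The main obstacle I anticipate is bookkeeping rather than depth: one must verify that the trace decomposition is a genuine direct-sum complement to $Ob$, so that the single map $g$ is simultaneously injective on $Ob$ and surjective, and that cohomology-and-base-change legitimately produces $H^i_\pi(\mathcal{O_Z})=0$ for the universal family (this rests on flatness of $\Z$ over $I_n(X,\beta)$ and on the fibres being curves). Canonicity of the resulting isomorphism is then automatic, since $\rho_i$, $f_i$, $g$ and the trace splitting are all canonical.
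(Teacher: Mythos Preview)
Your argument is correct and follows essentially the same route as the paper: both use the vanishing $H^i_\pi(\mathcal{O_Z})=0$ for $i\ge 2$ (fibres are curves) to get $\rho_2$ surjective and $\rho_3$ an isomorphism, then combine $f_i=\iota_i\circ\rho_i$ with the trace splitting to identify $\ker(g)=\image(\iota_2)$ and show $g$ is surjective. Your write-up is in fact slightly more careful than the paper's, which asserts $f_3$ is an isomorphism when only injectivity is established (and only injectivity is needed).
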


\begin{proof}
 Note that the cokernel of $\rho_2$ lies in $Ext^2_{\pi}(\sO_{Y},\mathcal{O_Z})=H^2_{\pi}(\mathcal{O_Z})$. This group is zero because for each $t$ in $I_n(X,\beta)$ there is $H^2(\sO_{\Z_t})=0$ by the reason that the dimension of $\Z_t$ is less than two. Hence $\rho_2$ is surjective. The same argument show $H^3(\sO_{\Z_t})=0$ and hence $\rho_3$ an isomorphism. Since $\iota_3$ is injection this shows $f_3$ is an isomorphism and hence $g$ is a surjection. The kernal of $g$ is then equal to the image of $\iota_2$ by the claim and the fact that $\rho_2$ is a surjection. Since $\iota_2$ is  a lifting of the trace map there is an isomorphism:
\begin{equation*}
Ext^{2}_{\pi}(\mathcal{I_Z}, \mathcal{I_Z})_0\longrightarrow Ext^3_{\pi}(\mathcal{O_Z},\mathcal{I_Z}).
\end{equation*}
\end{proof}
 The above proof can be applied to the case that the parameter space is
 any scheme $T$ istead of $I_n(X,\beta)$ because the obstruction theory is perfect.

\subsection{Cosection lemma}
Let $M$ be a DM-stack with perfect obstruction theory and with a cosection $\sigma :\Ob_M \to \sO_M$ of its obstruction
sheaf; let $E$ be a vector bundle on $M$ whose sheaf of sections surjects onto $\Ob_M$; The cosection lemma is proved by  J. Li and Y.H, Kiem in \cite{Jli}. 
\begin{lemm} (J. Li; Y.H. Kiem)\lab{cosection}:
Let $\tilde{\sigma} : E\to \sO_M$ be the composite of $\sigma$ with the quotient homomorphism $E \to \Ob_M$. Then the (virtual) normal cone $W \in Z_{\ast}E$ lies in the (cone)
kernel $E(\tilde
{\sigma})$ of $\tilde{\sigma}$.
\end{lemm}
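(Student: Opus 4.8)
The plan is to reduce the statement to a standard local model and then force the containment by a one–line computation along formal arcs. Since the intrinsic normal cone, the cosection $\tilde\sigma$, and the cone kernel $E(\tilde\sigma)$ are all compatible with restriction to an \'etale chart, and since $W\in Z_\ast E$ is a cycle, it suffices to prove the support-level containment over a neighbourhood of each point $p\in M$. Over such a neighbourhood the perfect obstruction theory may be presented in the usual way: choose a closed embedding $M\hookrightarrow V$ into a smooth $V$ together with a vector bundle $F$ on $V$ and a section $s\in\Gamma(V,F)$ with $M=Z(s)$, so that the obstruction theory is $[F^\vee|_M\to\Omega_V|_M]$, the obstruction sheaf is $\Ob_M=\coker(ds\colon T_V|_M\to F|_M)$, and $E=F|_M$ surjects onto $\Ob_M$. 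Under this presentation the virtual normal cone is the ordinary normal cone $W=C_{M/V}$, embedded in $E=F|_M$ through the surjection $\mathrm{Sym}(F^\vee|_M)\twoheadrightarrow\bigoplus_n I_M^n/I_M^{n+1}$ induced by $s$.

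First I would record what the kernel condition means. Since $\tilde\sigma$ factors through $\Ob_M=\coker(ds)$, in a local frame $F|_M\cong\sO_M^{\oplus r}$, writing $\tilde\sigma=(a_1,\dots,a_r)$ and $s=(s_1,\dots,s_r)$, the relation $\tilde\sigma\circ ds=0$ reads
\begin{equation*}
\sum_{i}a_i\,\frac{\partial s_i}{\partial x_j}=0\quad\text{on }M\qquad(\text{all }j),
\end{equation*}
so, choosing lifts $\tilde a_i$ of the $a_i$ to $V$, one has $\sum_i\tilde a_i\,\partial s_i/\partial x_j\in I_M$ near $p$. Next I would recall the arc description of the cone: a geometric point $v\in C_{M/V}|_p$ is represented by a formal arc $\phi\colon\spec\bk[\![t]\!]\to V$ with $\phi(0)=p$ and $\phi\not\subset M$; if $e=\mathrm{ord}_t\,\phi^\ast I_M$ then $\phi^\ast s_i=c_i t^e+O(t^{e+1})$ and $v$ is exactly $(c_1,\dots,c_r)\in F|_p=E|_p$ under the embedding above. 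Such arcs sweep out the support of $C_{M/V}$, so it is enough to show $\langle v,\tilde\sigma|_p\rangle=\sum_i a_i(p)\,c_i=0$ for every such $v$.

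The computation is then the crux. I would introduce the single scalar function on the arc
\begin{equation*}
g(t)=\sum_i \tilde a_i(\phi(t))\,\phi^\ast s_i(t)\in\bk[\![t]\!].
\end{equation*}
On one hand $\tilde a_i(\phi(t))=a_i(p)+O(t)$ and $\phi^\ast s_i=c_i t^e+O(t^{e+1})$, so the coefficient of $t^e$ in $g$ is $\sum_i a_i(p)c_i$. On the other hand I would differentiate: the chain rule gives $g'=\sum_i\frac{d}{dt}[\tilde a_i(\phi)]\,\phi^\ast s_i+\sum_j\phi_j'\sum_i\tilde a_i(\phi)\,(\partial s_i/\partial x_j)(\phi)$; the first sum carries a factor $\phi^\ast s_i=O(t^e)$, while in the second the inner sum equals $\sum_k b_{jk}(\phi)\,\phi^\ast s_k=O(t^e)$ after substituting $\sum_i\tilde a_i\,\partial s_i/\partial x_j=\sum_k b_{jk}s_k\in I_M$. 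Hence $g'(t)=O(t^e)$, and since $g(0)=0$ this gives $g(t)=O(t^{e+1})$; comparing with the leading term forces $\sum_i a_i(p)c_i=0$, i.e. $\tilde\sigma(v)=0$. Over the non-degeneracy locus $U=M\setminus M(\sigma)$ the covector $\tilde\sigma|_p$ is nonzero and this says precisely $v\in\ker(\tilde\sigma|_p)$, while over $M(\sigma)$ one has $\tilde\sigma|_p=0$ and the containment $v\in E(\tilde\sigma)|_p=E|_p$ is automatic. Thus $W\subset E(\tilde\sigma)$.

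I expect the main obstacle to be the bookkeeping of the reduction step rather than the differentiation of $g$: one must check that the local presentation $M=Z(s)\subset V$ faithfully realizes the given abstract data (the bundle $E$, the cycle $W$, and the cosection $\tilde\sigma$), that the identification of cone points with leading coefficients of $\phi^\ast s$ is the correct one and is independent of the chosen arc and of the lifts $\tilde a_i$, and that formal arcs over $\bk[\![t]\!]$ (after a finite residue-field extension) genuinely cover the support of $C_{M/V}$. Granting these standard facts from intersection theory and deformation theory, the vanishing of $g$ to order $e+1$ is exactly what produces the containment, and notably the argument is uniform in $p$: the non-degeneracy hypothesis enters only in interpreting the resulting linear condition as cutting out the hyperplane $\ker(\tilde\sigma|_p)$.
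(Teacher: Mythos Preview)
The paper does not give its own proof of this lemma: it is stated with attribution and cited from \cite{Jli}, so there is nothing in the text to compare against. Your argument is a correct reconstruction of the Kiem--Li proof. The reduction to a local Kuranishi model $M=Z(s)\subset V$ with $E=F|_M$ and $W=C_{M/V}\hookrightarrow F|_M$, the arc description of cone points as leading coefficients $(c_1,\dots,c_r)$ of $\phi^\ast s$, and the one-variable differentiation of $g(t)=\sum_i\tilde a_i(\phi)\,\phi^\ast s_i$ are exactly the ingredients of the original argument. The key step, that $\tilde\sigma\circ ds=0$ forces $\sum_i\tilde a_i\,\partial s_i/\partial x_j\in I_M=(s_1,\dots,s_r)$ and hence $g'=O(t^e)$, is carried out cleanly.

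The caveats you flag at the end are genuine but standard: the existence of the local presentation realizing the abstract $(E,W,\tilde\sigma)$ is in \cite{Cone}; that $I_M$ is locally generated by the $s_i$ follows from the surjectivity $F^\vee\twoheadrightarrow I_M/I_M^2$ and Nakayama; and the fact that formal arcs (after finite extension of the residue field) sweep out the support of $C_{M/V}$ is the usual description via deformation to the normal cone. With those inputs the proof is complete and matches what the paper invokes by citation.
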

 \begin{coro}
If $\sigma: \Ob \longrightarrow \sO_X$ is a surjection, then $0^! [W]=0$ in $Z_{\ast}E$. 
\end{coro}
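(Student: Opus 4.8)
The plan is to combine Lemma \ref{cosection} with the elementary geometry of the kernel cone in the surjective case, reducing the asserted vanishing to the triviality of the quotient line bundle $\sO_M$. Throughout I keep the notation of Lemma \ref{cosection}, so that $0^![W]$ denotes the zero-section Gysin pullback of the normal cone $W\in Z_\ast E$ (i.e.\ the virtual cycle).

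First I would record what surjectivity of $\sigma$ buys at the level of $\tilde\sigma$. If $\sigma\colon\Ob_M\to\sO_M$ is surjective, then so is the composite $\tilde\sigma=\sigma\circ q\colon E\to\sO_M$, where $q\colon E\to\Ob_M$ is the given surjection from the sheaf of sections of $E$. Since the target $\sO_M$ is invertible and $E$ is locally free, a surjection $E\twoheadrightarrow\sO_M$ is automatically locally split; hence $E':=\ker\tilde\sigma$ is a subbundle of corank one, the sequence $0\to E'\to E\xrightarrow{\tilde\sigma}\sO_M\to 0$ is fibrewise split, and the kernel cone is the honest subbundle $E(\tilde\sigma)=E'$. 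The key point to extract here is that the conormal data is trivial: $N_{E'/E}\cong\pi'^{*}(E/E')\cong\pi'^{*}\sO_M$ is the trivial line bundle on $E'$, where $\pi'\colon E'\to M$.

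Next I would feed in Lemma \ref{cosection}: the cone $W$ lies in $E(\tilde\sigma)=E'$, so I may write $[W]=j_{*}[W']$ for the closed embedding $j\colon E'\hookrightarrow E$ and a cycle $[W']$ supported in $E'$. Factoring the zero section of $E$ as $0_E=j\circ 0_{E'}$, a composite of regular embeddings $M\xrightarrow{0_{E'}}E'\xrightarrow{j}E$, and using functoriality of refined Gysin homomorphisms for composed regular embeddings (Fulton, \emph{Intersection Theory}, Ch.~6), I get $0_E^{!}=0_{E'}^{!}\circ j^{!}$, so that
\[
0_E^{!}[W]=0_{E'}^{!}\bigl(j^{!}j_{*}[W']\bigr).
\]
By the self-intersection formula for the codimension-one regular embedding $j$ one has $j^{!}j_{*}[W']=c_1(N_{E'/E})\cap[W']$; since $N_{E'/E}\cong\pi'^{*}\sO_M$ is trivial, $c_1(N_{E'/E})=0$, the inner term vanishes, and therefore $0^{!}[W]=0$.

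The only genuine point requiring care, and the step I expect to be the main obstacle, is the passage from surjectivity of $\sigma$ to $E(\tilde\sigma)$ being an honest subbundle with trivial quotient $\sO_M$; everything else is forced by $c_1(\sO_M)=0$. It is worth remarking that the same bookkeeping explains the non-surjective case: there $\tilde\sigma$ vanishes only along the degeneracy locus of $\sigma$, the quotient is no longer trivial, and the computation produces a class supported on $\{\sigma=0\}$ rather than zero—which is precisely the localization mechanism that will be used later in the paper.
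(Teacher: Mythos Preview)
Your argument is correct and is exactly the standard justification for this corollary: surjectivity of $\sigma$ forces $\tilde\sigma$ to be a surjection of bundles, so the kernel cone is an honest corank-one subbundle $E'\subset E$ with trivial normal line bundle, and then functoriality of Gysin maps plus the self-intersection formula give $0_E^{!}(j_*[W'])=0_{E'}^{!}\bigl(c_1(\pi'^*\sO_M)\cap[W']\bigr)=0$. The paper itself gives no proof of this corollary---it is stated as an immediate consequence of the cosection lemma and is really a citation of \cite{Jli}---so your write-up simply supplies the details the paper omits; there is no alternative approach to compare against.
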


 In the next section the cosection lemma will be applied to derived the vanishing properties
 of $Z_{DT}$ for $\Po$ scrolls.

\section{\bf Vanishings}
 \,\,\,\,With the $\CC^{\ast}$ action on the $\Po$ scroll,
 the virtual cycle can be written through virtual localization formula of
\cite{VLF}:
\begin{equation*}
[\mathbf{\cM}]^{vir}=\iota _{\ast} \sum
\frac{[\mathbf{\cM}_i]^{vir}}{e(N_i^{vir})}  ,
\end{equation*}
where $\mathbf{\cM}$ is the moduli space $I_n(X,\beta)$. We use the section $\sigma$ to ``localize" the $[\mathbf{\cM}_i]^{vir}$, which is not the classical $\CC^{\ast}$ localization. To fix the notation,  denote $S=S_0$ and let $S_{\infty}$ be the cap of $X$. If there is an curve class $E$ on $S$, denote the same curve class on $S_0$ by $E_0$, and
the same class on $S_{\infty}$ by $E_{\infty}$. The second cohomology of $X$ is determined by:
\begin{eqnarray*}
&H^2(X,\ZZ)=H^2(S,\ZZ)\oplus \ZZ[F] \\
&E_0=E_{\infty} + n F , E\in H^2(S,\ZZ)\\
&n=(c_1(L).[E]) ,
\end{eqnarray*}
where $F$ is the fiber class.
In this section we prove the vanishing for the $\Po$ scroll
\begin{prop}
Let $\sigma$ be a holomorphic two-form on $S$ with smooth zero loci $C$. Let $I_n(X,\beta)_{\CC^{\ast}}$ be the fixed loci of 
$I_n(X,\beta)$ under the $\CC^{\ast}$ action. Suppose the horizontal component of
$\beta$ in $H^2(S,\ZZ)$ is not a multiple of $C$.  Then there is $[I_n(X,\beta)_{\CC^{\ast}}]^{vir}=0$ and by the virtual localiztion formula $[I_n(X,\beta)]^{vir}=0$.
\end{prop}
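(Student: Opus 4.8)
The plan is to combine the $\CC^{\ast}$-localization formula quoted above with the cosection lemma (Lemma~\ref{cosection}), using the two-form $\sigma$ to manufacture a surjective cosection on the fixed locus. By the displayed virtual localization formula it suffices to prove $[I_n(X,\beta)_{\CC^{\ast}}]^{vir}=0$, so I work throughout on $M:=I_n(X,\beta)_{\CC^{\ast}}$. First I would record the geometry of the fixed loci: the $\CC^{\ast}$-action fixes the two sections $S_0$ and $S_\infty$, whose normal bundles in $X$ are $L$ and $L^{-1}$ (in one order or the other), and acts with nonzero weight on the fibers. Hence a $\CC^{\ast}$-fixed subscheme $Z$ splits into a horizontal part supported on $S_0\cup S_\infty$ and carrying the horizontal class $\beta_S\in H^2(S,\ZZ)$, together with vertical (fiber) components and embedded points. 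The $\CC^{\ast}$-weight decomposition of the perfect obstruction theory correspondingly splits $\Ob|_M$ into its weight-zero summand, which is the obstruction sheaf of $M$ feeding the numerator of the localization formula, and the moving summand, which enters $e(N^{vir})$; the point is that $\sigma$ interacts only with the horizontal, weight-zero directions.

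Next, the cosection. Using Lemma~\ref{iso} I identify $\Ob\cong\Ext^3_\pi(\sO_\Z,\mathcal{I_Z})$, so that by relative Serre duality on the threefold $X$ a cosection $\Ob\to\sO_M$ corresponds to a canonical family of elements of $\Ext^1_\pi(\mathcal{I_Z},\mathcal{I_Z}\otimes K_X)_0$ over $M$. Since $\sigma$ vanishes exactly along $C$ we have $K_S\cong\sO_S(C)$ with $\sigma$ its defining section. Restricting to the fixed sections, the adjunction relations $K_X|_{S_0}\cong K_S\otimes L^{-1}$ and $K_X|_{S_\infty}\cong K_S\otimes L$ (this is the Grothendieck-duality computation of the relative dualizing sheaf) show that the weight-zero part of this datum along the horizontal support is governed by $K_S$ and $\sigma$ alone, even though $X$ carries no global holomorphic two-form. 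On this weight-zero piece the construction reduces to the Kiem--Li surface cosection for the horizontal curve inside $S\cong S_0$ (and $S_\infty$): the composite that contracts a first-order deformation of the horizontal curve against $\sigma\colon T_S\to\Omega_S$ and lands in $H^1$ of the dualizing sheaf of that curve, i.e.\ in $\sO_M$. The vertical and embedded contributions carry nonzero $\CC^{\ast}$-weights and do not enter the weight-zero cosection.

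Finally I would check surjectivity and conclude. The surface cosection degenerates at $[Z]$ precisely when the horizontal support of $Z$ is contained in the zero divisor $C$ of $\sigma$; but in that case the horizontal $1$-cycle of $Z$ is a nonnegative multiple of $[C]$, forcing $\beta_S$ to be a multiple of $[C]$, contrary to hypothesis. Hence the induced $\tilde\sigma\colon\Ob_M\to\sO_M$ is surjective at every point of $M$, and the Corollary to Lemma~\ref{cosection} gives $0^![W]=0$, i.e.\ $[M]^{vir}=[I_n(X,\beta)_{\CC^{\ast}}]^{vir}=0$. The displayed localization formula then yields $[I_n(X,\beta)]^{vir}=0$.

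The main obstacle is the second step: constructing the cosection in families and identifying it, through Lemma~\ref{iso}, Serre/Grothendieck duality and the $\CC^{\ast}$-weight decomposition, with the surface two-form pairing, together with the precise verification that its degeneracy locus is exactly the locus of curves with horizontal support in $C$. The bookkeeping for the vertical and embedded structure and for base change and perfectness of the relative $\Ext$-sheaves are the technical points that must be handled with care.
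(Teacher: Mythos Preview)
Your overall strategy matches the paper's: produce a $\CC^{\ast}$-equivariant cosection of the obstruction sheaf on the fixed locus $M$, verify it is everywhere nonzero under the hypothesis on $\beta$, and apply the cosection lemma together with virtual localization. The difference lies entirely in the middle step, and there your proposal has a real gap.

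The paper does not reduce to any ``surface cosection''. It uses Lemma~\ref{iso} and Serre duality to identify $\Ob^{\vee}\cong Hom_{p_\ast}(\I,\sO_Z\otimes q^\ast K_X)$ and then writes down an explicit element. On a chart $U_\alpha\times\CC$ the $\CC^{\ast}$-invariant ideal is $\bigoplus_n I_n^\alpha\, t_\alpha^n$ with $I_0^\alpha\subset I_1^\alpha\subset\cdots$, and the cosection is
\[
\xi_\alpha\bigl(f\,t_\alpha^{\,n+1}\bigr)=(n+1)\,\bar f\,t_\alpha^{\,n}\otimes\pi^\ast\sigma\otimes\widehat v_\alpha,
\]
essentially the vertical vector field $\partial/\partial t$ contracted against the $3$-form $\pi^\ast\sigma\wedge dt$. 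The map is weight-zero not because vertical contributions are absent, but because the weights of $t_\alpha$ and $\widehat v_\alpha$ cancel. Surjectivity at a closed point follows immediately from this formula: if $\xi\vert_x=0$ at a point of $Z_x\cap S_0$ lying off $C$ (where $\sigma\neq 0$), then $I_{n+1}\subset I_n$ for all $n$, forcing $I_0=I_1=\cdots$ and hence $\dim Z_x=2$, a contradiction.

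Your construction, by contrast, invokes a ``Kiem--Li surface cosection for the horizontal curve inside $S$'' and asserts that vertical and embedded data carry nonzero weight and do not enter. Two problems: first, the cosection in \cite{Jli} is built for stable maps, not for ideal sheaves on a surface, so you are citing something that does not exist in the form you need. Second, the weight-zero obstruction sheaf $\Ob_M$ of the fixed locus is \emph{not} the obstruction sheaf of the horizontal curve in $S$; it also governs deformations of the $\CC^{\ast}$-equivariant thickening along the fibres (the chain $I_0\subset I_1\subset\cdots$). A cosection defined only on the ``surface piece'' is not a map out of $\Ob_M$ unless you exhibit that piece as a quotient, which you do not do. In fact the paper's cosection is nonzero \emph{precisely because} of the vertical thickening data $I_n\subsetneq I_{n+1}$, with $\sigma$ entering only as a nowhere-vanishing scalar off $C$. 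So the slogan that vertical directions are irrelevant to the weight-zero cosection is the opposite of what actually happens; you should construct $\xi$ explicitly as the paper does (or equivalently, as the image of $\pi^\ast\sigma\vert_Z$ in $p_\ast(\mathcal N_{Z}\otimes K_X)$, cf.\ the paper's Section~4) rather than appeal to a surface analogue.
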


\subsection {The construction of an equivariant global cosection}

 Let $\eta:Z\longrightarrow \fM$ be the universal family of $\CC^\ast$-equivariant subschemes in $X$ with $\chi=n$ and
$c_2=\beta$. Then $\fM\cong {I}_n(X,\beta)^{\CC^\ast}$ is the maximal closed subscheme of $I_n(X,\beta)$ fixed by the $\CC^{\ast}$ action. Let $\I$ be the ideal sheaf of $Z$ in $X\times \fM$. Let $q:X\times \fM\to X$ and $p:X\times\fM\to \fM$ be the projections.

Note that here $\CC^{\ast}$ acts on everything.
 By the lemma (\ref{iso}) in previous section, there is an isomorphism:
 \begin{eqnarray*}
Ob_{\eta}=Ext^{2}_{p_\ast}(\I, \I)_0 \cong Ext^3_{p_\ast}(\sO_Z, \I)
 \end{eqnarray*}

between sheaves on $\fM$. By Serre duality 
$$Ob_{\eta}^{\vee}\cong Ext^3_{p_\ast}(\sO_Z, \I)^{\vee}\cong Hom_{p_\ast}(\I, \sO_Z \otimes q^{\ast}K_X).$$

From the section $\sigma$ of the canonical bundle, we construct an
equivariant  section $\xi$ of $Hom_{p_\ast}(\I, \sO_Z\otimes q^{\ast}K_X)$ that is nonzero at every geometric point in $\fM$.  Let  $X_0$ be the total space of $L$ over $S_0$ and $X_{\infty}$ the total space of $L^{\vee}$ over $S_{\infty}$.
Then $X=X_0\cup X_{\infty}$. For $i=0,\infty$ let $Z_i=Z\cap (X_i\times \fM)$, $I_i$ the ideal sheaf of $Z_i$ in $X_i\times \fM$ and $\eta_i:=\eta \mid_{Z\cap (X_i\times \fM)}$.
To exhibit a global section of $Hom_{p_\ast}(\I, \sO_Z\otimes q^{\ast}K_X)$
 we construct a section of 
 $Hom_{p_\ast}(\I_0, \sO_{Z_0}\otimes
 q^{\ast}K_{X_0})$ and a section of  $Hom(\I_\infty, \sO_{Z_\infty}\otimes q^{\ast}K_X)$ such that both are zero homomorphisms on $(X_0\cap X_{\infty}) \times \fM$.
 Fix a trivialization of $L$ over a covering $\{U_{\alpha}\}$ of $S_0$ :
 \begin{eqnarray*}
L|_{U_\alpha}\cong U_\alpha \times \CC\mapright{t_\alpha} \CC.
\end{eqnarray*}
 Denote $X_\alpha=U_\alpha \times \CC$. Then there is $K_{X_\alpha}\cong\pi^{\ast}K_{U_\alpha}\otimes \pi^{\ast} L^{\vee}| _{U_\alpha}.$ The scheme $Z\cap(X_{\alpha} \times \fM)$ is $\CC^{\ast}$-equivariant inside $X_\alpha \times \fM$. There is $\sO_{X_\alpha}\cong \sO_{U_\alpha}[t_\alpha]$ and $t_\alpha$ has weight $-1$ with respect to the $\CC^\ast$ action. The ideal sheaf $\I_0|_{Z\cap(X_{\alpha} \times \fM)}$ is a $\CC^\ast$ invariant subsheaf of $\sO_{U_{\alpha}\times\fM}[t_\alpha]$. It is of the form $ \oplus \I_n^{\alpha} t_{\alpha}^n$
where $\I_0^\alpha\subset\I_1^\alpha\subset \I_2^\alpha\cdots$ are subsheaves of $\sO_{U_\alpha \times\fM}$. By finite ness for some $k$ large enough there is $\I_k^\alpha=\I_{k+1}^\alpha=\I_{k+2}=\cdots$. We denote the section $t_\alpha=1$ of $\pi^\ast L|_{U_\alpha}\cong \L|_{U_\alpha}$ by $v_\alpha$ and its dual basis $\widehat{v}_\alpha$ of $L^{\vee}|_{U_\alpha}$. Define
\begin{eqnarray*}
\xi_\alpha: \oplus \I_n^\alpha t_\alpha ^n \lra \oplus(\sO_{U_\alpha\times \fM}/ \I_n^\alpha)\, t_\alpha ^n\otimes q^{\ast} K_{X_{\alpha}},
\end{eqnarray*}
such that $\xi_\alpha(f\,t_\alpha^0)=0,$ and for $n\geq 0$, $f\in \I_{n+1}^{\alpha}$
\begin{equation}
\xi_\alpha(f\, t_\alpha^{n+1} )=(n+1)\bar{f}\, t_\alpha^n \,\,\,\otimes  q^\ast(\pi^\ast\sigma \otimes \widehat{v}_\alpha), \\
\end{equation}
where $\bar{f}\in \sO_{U_\alpha\times \fM}/\I_n^{\alpha}$. From $\I_k^\alpha=\I_{k+1}^\alpha=\I_{k+2}^\alpha=\cdots$, the $\xi_\alpha$ vanishes on the complement of $S_0\times \fM$. The counterpart of the section for $X_\infty$ is defined in the same form with $\widehat{v}$ substituted by $v$ and it also vanishes away from $S_\infty\times\fM$.\\

 The morphism glued over different covers. Let $g_{\alpha\beta}$ be the coordinate transform:
$$t_\alpha =g_{\alpha\beta} t_{\beta},\,\, v_{\alpha}=g_{\alpha\beta}^{-1} v_{\beta},\,\, \widehat{v}_{\alpha}=g_{\alpha\beta}\widehat{v}_{\beta}.$$
Then both side side of (3.1) are transformed to the map over $U_{\beta}$ multiplied by $g_{\alpha\beta}^{n+1}$. Hence $\xi_\alpha\vert_{\alpha \beta}=\xi_\beta\vert_{\alpha,\beta}$. Hence the two morphisms gives sections of 
$Hom_{p_\ast}(\I_0, \sO_{Z_0}\otimes q^{\ast}K_{X_0})$ and  $Hom(\I_\infty, \sO_{Z_\infty}\otimes q^{\ast}K_X)$, both of which vanishes over the  $(X_0\cap X_\infty)\times\fM$. Thus they glue to give a section 
$$\xi\in Hom_{p_\ast}(\I, \mathcal{O}_{X\times \fM}/\I\otimes q^{\ast}K_X)\cong \Ob^{\vee}.$$

 In (3.1), the weight of $t_\alpha$ and $\widehat{v}_\alpha$ are both $-1$ with all other terms of weight zero. So the total weight of the $\xi$ is $(n+1)-(n+1)=0$ and $\xi$ is equivariant. We have
$$\xi\in (\Ob^{\vee})^{\CC^\ast}\cong (\Ob^{\CC^\ast})^{\vee}.$$

\subsection {Vanishings}
\,\,\,\,Now we prove proposition $3.1$:

\begin{proof}
Let $x$ be an arbitrary geometric point in $\fM$ that corresponds to an ideal sheaf
$I_x$ of a $\CC^{\ast} $ invariant subscheme $Z_x$ of $X$. Denote $C_0=C$ in $S_0$ and $C_\infty$ the same curve in $S_\infty$. Since the homology class of $Z_x$ has the horizontal component represented by an effective curve that is not multiple
of canonical class, there exists some point $w$ in the $Z_x\cap S_i$ for some $i=0$ or $\infty$ such that $w$ is not in $C_0\cup C_{\infty}$ and the local ring $\sO_{Z_x\cap S_i,x}$ of dimension one. Without loss of generality we assume $w$ lies in a chart $U_{\alpha}\subset S_0$. We assume further $U_\alpha$ does not intersect $C_0$ so that $\sigma$ is nondegenerate over $U_\alpha$. Denote $I_n:=\I_n^{\alpha}|_x$. For $n\geq 0$, $f\in I_{n+1}$,
\begin{eqnarray*}
\xi_\alpha|_x: \oplus I_n t_\alpha ^n \lra \oplus(\sO_{U_\alpha\times \fM}/ I_n)\, t_\alpha ^n\otimes K_{X_{\alpha}},\\
\xi_\alpha(f\, t_\alpha^{n+1} )=(n+1)\bar{f}\, t_\alpha^n \,\,\,\otimes  (\pi^\ast\sigma \otimes \widehat{v}_\alpha).
\end{eqnarray*}
 Suppose $\xi_\alpha|_x$ is zero homomorphism. Since the two-form $\sigma_{\alpha}$ is nonzero over $U_\alpha$ there is $I_0\supset I_1\supset I_2\cdots$. By the argument in previous section there is
$I_0=I_1=I_2=\cdots$ and hence 
$$\sO_{Z_x}|_{X_\alpha}\cong (\sO_{U_\alpha}/I_0)[t_\alpha]=\sO_{Z_x\cap S_0}[t_\alpha].$$
Since $Z_x\cap S_0$ is of dimension one at $w$, the dimension of $Z_x\cap\alpha$ at $w$ is equal to two. This contradicts to that $Z_x\in I_n(X,\beta)$ is an one dimensional subscheme of $X$.

Since  $Ext^3_X(\sO_{Z_x}, I_x)^{\vee}\cong Hom_X(I_x, \sO_{Z_x} \otimes K_X)$, the above argument shows that the restriction $$\xi|_x: Ext^3_{p_\ast}(\sO_Z, \I)|_x=Ext^3_X(\sO_{Z_x},I_x)\to \CC$$
is a surjective homomorphism. Hence as a cosection of $\Ob^{\CC^*}$, $\xi$ nonzero at every geometry point $x\in I_n(X,\beta)$. By corollary (2.4)
\begin{eqnarray*}
[I_n(X, \beta) ^{\CC^{\ast}}]^{virt} =0.
\end{eqnarray*}
By the virtual localization formula, $[I_n(X,\beta)]^{vir}=0$.
 \end{proof}
 This shows that the Donaldson-Thomas invariants of the
 $\Po$ scroll are zero whenever $\beta_h$ is not multiple
 of $[C]$. From this one concludes that counting of ideal sheaves
 of a $\Po$ scroll over a $K3$ surface or an abelian surface
 is always zero.

  \section{\bf General three-fold with a two-form}

 We extend the two-form localiztion of DT invariants to general three-folds.
 Let $X$ be a general 3-fold, and $\sigma$ a holomorphic two
form on $X$. Note here $X$ needs not to be a $\Po$ scroll over a surface. Assume $\beta$ is in $H_2(X,\ZZ)$. For
any closed point  $x\in \cM:=I_n(X,\beta)$ that corresponds to a subscheme $Z\subset X$, there is an exact sequence:
\begin{eqnarray*}
0\longrightarrow T_Z \longrightarrow T_X\vert_{Z} \longrightarrow \mathcal{N}_{Z\subset X}
\end{eqnarray*}
Tensor $K_X$ and take global sections,
\begin{eqnarray*}
\phi_{Z}: \Gamma(X,T_Z\otimes K_X)\longrightarrow \Gamma(X,T_X\vert_{Z}\otimes K_X)=\Gamma(X, \Omega^2_X\vert_{Z})
\end{eqnarray*}

\begin{defi}
The degeneracy loci of the cosection $\xi$ is the set of all $x\in \cM$ which corresponds to subschemes
$Z$ such that the restriction $\sigma \vert_Z$ lies in the image of $\phi_Z.$
\end{defi}

\begin{prop}
If $\xi$ restricts to zero at $x\in I_n(X,\beta)$ then $x$ is in the degeneracy loci. If the degeneracy loci is empty then $[I_n(X,\beta)]^{vir}=0$. In this case the DT invariants vanishes
$$Z_{DT}(X,\prod \tau_{q_i}(\gamma_i))_{\beta,n}=0\,\, \text{for all descendants}\, \gamma_i.$$
\end{prop}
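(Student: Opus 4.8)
The plan is to dispose of the three assertions in turn, the first being the crux and the last two being formal consequences. Throughout, fix a closed point $x \in \cM = I_n(X,\beta)$ with associated subscheme $Z \subset X$ and ideal sheaf $I_Z$, and recall from Lemma \ref{iso} together with Serre duality on the threefold $X$ the identifications
$$\Ob_x \cong \Ext^3_X(\sO_Z, I_Z), \qquad \Ob_x^\vee \cong \Hom_X(I_Z, \sO_Z \otimes K_X),$$
so that the fibre $\xi|_x \in \Ob_x^\vee$ is naturally an element of $\Hom_X(I_Z, \sO_Z \otimes K_X)$. The first thing I would do is make the origin of $\xi|_x$ in the two-form explicit: under the isomorphism $\Omega^2_X \cong T_X \otimes K_X$, restriction produces $\sigma|_Z \in \Gamma(X, T_X|_Z \otimes K_X)$, and composing with the projection $T_X|_Z \to \mathcal{N}_{Z \subset X}$ of the conormal sequence gives $\psi(\sigma|_Z) \in \Gamma(X, \mathcal{N}_{Z \subset X} \otimes K_X)$. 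The claim I would establish is that $\xi|_x$ is the image of $\psi(\sigma|_Z)$ under the injective inclusion
$$\Gamma(X, \mathcal{N}_{Z \subset X} \otimes K_X) = \Hom_X(I_Z/I_Z^2, \sO_Z \otimes K_X) \hookrightarrow \Hom_X(I_Z, \sO_Z \otimes K_X),$$
the injectivity coming from the surjection $I_Z \twoheadrightarrow I_Z/I_Z^2$.

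Granting this, the first assertion is immediate. Dualizing the conormal sequence $I_Z/I_Z^2 \to \Omega_X|_Z \to \Omega_Z \to 0$, tensoring by $K_X$, and taking global sections yields the left-exact sequence
$$0 \to \Gamma(X, T_Z \otimes K_X) \xrightarrow{\phi_Z} \Gamma(X, T_X|_Z \otimes K_X) \xrightarrow{\psi} \Gamma(X, \mathcal{N}_{Z \subset X} \otimes K_X),$$
which is precisely the sequence defining $\phi_Z$. Since the inclusion above is injective, $\xi|_x = 0$ is equivalent to $\psi(\sigma|_Z) = 0$, i.e. to $\sigma|_Z \in \ker\psi = \image\phi_Z$ by left-exactness. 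In particular $\xi|_x = 0$ forces $\sigma|_Z \in \image\phi_Z$, so $x$ lies in the degeneracy locus; this proves the first assertion (in fact as an equivalence).

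The remaining assertions are then formal. If the degeneracy locus is empty, the contrapositive of the first assertion gives $\xi|_x \neq 0$ at every closed point $x \in \cM$; since $\sO_\cM$ has rank one, a cosection $\xi \colon \Ob \to \sO_\cM$ that is fibrewise nonzero has vanishing cokernel and is therefore surjective. The Corollary (2.4) to the Kiem--Li cosection lemma then yields $[I_n(X,\beta)]^{\mathrm{vir}} = 0^![W] = 0$. Finally, each descendant invariant is an integral of tautological classes against this cycle, so
$$Z_{DT}(X, \textstyle\prod_i \tau_{q_i}(\gamma_i))_{\beta, n} = \int_{[I_n(X,\beta)]^{\mathrm{vir}}} \prod_i \tau_{q_i}(\gamma_i) = 0$$
for all insertions $\gamma_i$, which is the last statement.

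The main obstacle is the identification asserted in the first paragraph: one must check that the Serre dual of $\xi|_x$, built from the global two-form, is exactly the normal projection $\psi(\sigma|_Z)$, and in particular that it factors through $I_Z/I_Z^2$. This requires tracking the duality isomorphism $\Ob_x^\vee \cong \Hom_X(I_Z, \sO_Z \otimes K_X)$ compatibly with contraction by $\sigma$, and carrying it out for an arbitrary one-dimensional (possibly non-reduced, non-lci) subscheme $Z$, where $T_Z$ and $\mathcal{N}_{Z \subset X}$ are understood as the sheaf-theoretic $\mathcal{H}om(\Omega_Z, \sO_Z)$ and $\mathcal{H}om(I_Z/I_Z^2, \sO_Z)$; the left-exactness of the dualized conormal sequence holds in this generality, but the identification of $\xi$ with the contraction should be performed over the universal family $\Z \to \fM$ so that it is known to globalize to an honest algebraic cosection $\Ob \to \sO_\cM$, which is what the appeal to Corollary (2.4) needs.
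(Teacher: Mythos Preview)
Your proposal is correct and follows essentially the same route as the paper: both identify $\Ob^\vee$ with $p_\ast(\mathcal{N}_{\Z\subset X\times\cM}\otimes q^\ast K_X)$ via Serre duality, produce $\xi$ as the image of $\sigma|_\Z$ under the normal projection in the (tensored, pushed-forward) conormal sequence, and read off from left-exactness that $\xi|_x=0$ forces $\sigma|_Z\in\image\phi_Z$, then invoke Corollary~2.4. The only organizational difference is that the paper builds $\xi$ directly over the universal family and then restricts, whereas you argue pointwise first and flag the globalization as the step to be filled in; that is precisely what the paper does up front, so your ``main obstacle'' is already handled there.
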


\begin{proof}
Here $\Z$ is the universal family of schemes in $X\times \cM$ parameterized by $I_n(X,\beta)$. By Serre duality one has over $\cM$:
\begin{eqnarray*}
\Ob^{\vee}&=&\mathcal{H}om_{\sO_\cM}(Ext^3_{p_\ast}(\sO_{X\times\cM}/\I_{\Z}, \I_{\Z}), \sO_\cM)\cong Hom_{p_\ast}(\I_{\Z}, \sO_{X\times \cM}/\I_{\Z}\otimes q^{\ast}K_X)\\
&\cong&p_\ast( \mathcal{H}om(\I_{\Z}/\I^2_{\Z}, \sO_{X\times \cM}/\I_{\Z})\otimes q^{\ast}K_X)=p_\ast(\mathcal{N}_{\Z\subset X\times \cM}\otimes q^{\ast}K_X).
\end{eqnarray*}
Tensor the sequence
\begin{eqnarray*}
0\longrightarrow T_\Z\longrightarrow q^{\ast}T_X\vert_{\Z} \longrightarrow \mathcal{N}_{\Z\subset X\times \cM}
\end{eqnarray*}
 with $q^{\ast}K_X$ and take pushforward by $p$:
\begin{eqnarray*}
0\longrightarrow p^{\ast}(T_\Z\otimes q^{\ast}{K_X})\longrightarrow p_\ast(q^{\ast}T_X\vert_\Z\otimes q^{\ast}K_X) \longrightarrow p_\ast(\mathcal{N}_{\Z\subset X\times \cM}\otimes q^{\ast}K_X).
\end{eqnarray*}

The middle term $p_\ast(q^{\ast}T_X\vert_\Z\otimes q^{\ast}K_X) $ is isomorphic to
$p_\ast(q^{\ast}\Omega^2_X)|_{\Z}$ and $\sigma$ induced a global section. Denote this
global section of $\Ob^{\vee}$ by $\xi$. Given an arbitrary closed point $x$ in $I_n(X,\beta)$ with $Z=\Z|_x$, there is a commutative diagram:
$$
\begin{CD}
0@>>> p^{\ast}(T_Z\otimes q^{\ast}{K_X})\vert_x @>>> p_\ast(q^{\ast}T_X\vert_\Z\otimes q^{\ast}K_X)\vert_x @>>> p_\ast(\mathcal{N}_{\Z\subset X\times \cM}\otimes q^{\ast}K_X)\vert_x \\
@. @VVV @VVV @VV{\lambda}V \\
0@>>> \Gamma(X,T_Z\otimes K_X) @>{\phi_Z}>> \Gamma(X, \Omega^2_X \vert_Z) @>>> N_{Z\subset X}\otimes q^\ast K_X.
\end{CD}
$$
 The value of $\xi$ at $x$ has its image under $\lambda$ to be $\sigma|_{Z}\in\Gamma(X,\Omega^2_X|_X)$.  If $\xi|_x=0$ then $\sigma|_Z=0$
implies $x$ is in the degeneracy loci. If degeneracy loci is empty the cosection $\xi$ is a surjection and the virtual cycle vanish by corollary (2.4).
\end{proof}

  If $X$ is the $\Po$ scroll, the $\xi$ in section 3 is the same
 as the $\xi$ here, after restricting to the $\CC^{\ast}$ fixed moduli space.\\

 In case that $X=\Po(\sO_S\oplus L)$ as in section 3, the following properties characterize the possible configuration of the components of the subscheme $Z$ from the degeneracy loci. 

\begin{prop} Let $X=\Po(\sO_S\oplus L)$. Suppose $Z$ is in the degeneracy loci of $\xi$. Then $Z$ is a disjoint union of subschemes of two types: the first has reduced part lies in $P(\sO_C\oplus L|_C)$ and the second has supports equal to fibers
over points outside of $C$ with the scheme structure $\CC^{\ast}$ invariant.
\end{prop}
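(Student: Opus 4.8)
The plan is to translate the membership of $Z$ in the degeneracy locus into the existence of a twisted tangent vector field, and then to read off the geometry of $Z$ fiber by fiber. By the definition of the degeneracy locus, $Z$ lies in it precisely when $\pi^*\sigma|_Z$ is in the image of $\phi_Z$; equivalently, using the canonical isomorphism $\Omega^2_X\cong T_X\otimes K_X$ (the one already used in the proof of Proposition~4.2, induced by contraction with a local volume form), there is a section $v\in\Gamma(Z,T_Z\otimes K_X)$ — a twisted derivation tangent to $Z$ — whose contraction image equals $\pi^*\sigma|_Z$. First I would fix a chart $X_\alpha=U_\alpha\times\CC$ with fiber coordinate $t=t_\alpha$ and local volume form $\mu=\pi^*(dx\wedge dy)\wedge dt$, and write $v=w\otimes\kappa$ with $w$ a local field and $\kappa$ a local generator of $K_X$, so that the degeneracy condition becomes the local identity $\iota_w\tilde\kappa|_Z=\pi^*\sigma|_Z$, where $\tilde\kappa$ is the $3$-form corresponding to $\kappa$.

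The crucial local step is to show that $w$ is vertical over $S\setminus C$. Contracting the identity once more with $w$ and using $\iota_w\iota_w=0$ gives $\iota_w(\pi^*\sigma|_Z)=0$, so $w$ lies in the kernel distribution of $\pi^*\sigma$. Over $S\setminus C$ the form $\sigma$ is nondegenerate, hence $\pi^*\sigma$ has rank two with kernel exactly the vertical line $\langle\partial_t\rangle$; moreover $\iota_w\tilde\kappa=\pi^*\sigma\neq 0$ forces $w\neq 0$. Thus over $S\setminus C$ one has $w=c\,\partial_t$ with $c$ a local unit, and since $w$ is tangent to $Z$, the ideal $J$ of $Z$ in $\sO_{U_\alpha}[t]$ satisfies $\partial_t(J)\subseteq J$ at every point lying over $U_\alpha\subseteq S\setminus C$.

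Then I would extract the scheme structure from $\partial_t(J)\subseteq J$ using characteristic zero. Writing any $f=\sum_{n\le d}a_n t^n\in J$ and applying $\partial_t$ repeatedly, the top term $d!\,a_d=\partial_t^{\,d}f\in J$ is a unit multiple of $a_d$, so $a_d\in J$, and by descending induction every homogeneous piece $a_n t^n$ lies in $J$; hence $J$ is $t$-graded, i.e.\ $\CC^*$-invariant. The same identity rules out any zero-dimensional or embedded part over $S\setminus C$: a finite-colength localization would contain a minimal power $(t-t_0)^m\in J$ with $m\ge1$, while $\partial_t(t-t_0)^m$ is a unit multiple of $(t-t_0)^{m-1}$, contradicting minimality. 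So over $S\setminus C$ every point of $Z$ lies on a purely one-dimensional $\CC^*$-invariant subscheme supported on a full fiber. In particular no component whose image in $S$ meets $S\setminus C$ can be horizontal, since a horizontal reduced tangent direction projects nontrivially to $T_pS$ and cannot contain the nonzero vertical $w$; therefore every horizontal component has image contained in $C$.

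Finally I would assemble the two types. The part of $Z$ over $C$ has reduced support in $\pi^{-1}(C)=P(\sO_C\oplus L|_C)=D$, which is the first type, whereas the part over $S\setminus C$ is a union of $\CC^*$-invariant thickened fibers over points of $S\setminus C$, which is the second type. Since their supports sit over the disjoint closed sets $C$ and $S\setminus C$, the scheme $Z$ splits as the disjoint union of the two, as claimed. I expect the main obstacle to be the careful bookkeeping in the local step: tracking the $K_X$-twist so that ``tangent to $Z$'' reduces cleanly to $\partial_t(J)\subseteq J$, and then upgrading the set-theoretic verticality to the scheme-theoretic conclusions — absence of embedded points, $\CC^*$-invariance of the fiber thickenings, and the global disjoint splitting.
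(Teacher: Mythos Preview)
Your proposal is correct and follows essentially the same route as the paper. Both arguments reduce the degeneracy condition to the statement that $\partial_t$ lies in the image of $T_Z\to T_X|_Z$ over $S\setminus C$, i.e.\ $\partial_t(J)\subseteq J$, and then deduce $\CC^\ast$-invariance. The paper does this by choosing Darboux coordinates with $\sigma=dx\wedge dy$ and reading off directly that the preimage of $\sigma$ under $T_X\otimes K_X\cong\Omega^2_X$ is $\partial_t\otimes(dx\wedge dy\wedge dt)$; your contraction identity $\iota_w\iota_w=0\Rightarrow\iota_w(\pi^*\sigma)=0$, combined with $\ker\pi^*\sigma=\langle\partial_t\rangle$ over $S\setminus C$, is the same identification phrased without Darboux coordinates.

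Two remarks. First, your induction in step~5 already yields the stronger conclusion $a_n\in J$ for all $n$, hence $J=J_0[t]$ with $J_0\subset\sO_{U_\alpha}$; since $\dim Z\le 1$ this forces $V(J_0)$ to be zero-dimensional, so $Z$ over $S\setminus C$ is the pullback of a finite scheme and is automatically supported on full fibers. This makes your step~6 (ruling out zero-dimensional or embedded pieces via the minimal-power argument) and the separate ``horizontal tangent direction'' argument redundant. Second, your final sentence ``disjoint closed sets $C$ and $S\setminus C$'' is imprecise, since $S\setminus C$ is open; what you need and have is that the Type~2 support is a finite union of fibers over points of $S\setminus C$, which is closed in $X$ and disjoint from $\pi^{-1}(C)$, so $\operatorname{supp}Z$ decomposes into two disjoint closed pieces and the scheme splits. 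None of these affect correctness.
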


\begin{proof}

  \,\,Let $q$ be a point in $X$ that $Z$ passed by and its projection to the surface is a point $p$ on $S_0$. Pick up an analytic neighborhood $U$ (or etale neighborhood) of $q$ in $X$ which comes from coordinate $x,y$ on $S_0$ and $t$ for the vertical direction. Let the definig ideal of $Z$ in $U$ be $\I=\mathcal{I_Z}$ and $\CC[x,y,t]/\I=A$. Denote $\frac{\partial}{\partial_x}, \frac{\partial}{\partial_y}, \frac{\partial}{\partial_t}$ as the standard tangent vector field (holomorphic) and $dx, dy, dt$ the standard one forms. Since $\sO_Z\vert_U=\CC[x,y,t]/\I=A$, one has the following exact sequence:
 \begin{eqnarray*}
0 \longrightarrow \cM \longrightarrow \Omega^1_{U}\longrightarrow \Omega^1_{Z}\longrightarrow 0,
\end{eqnarray*}
 where $\cM=\<df\>_{f\in \I}$ is the module generated by $\{ df\vert f\in \I\}$ in $\Omega^1_{U}=\CC[x,y,t]dx\oplus \CC[x,y,t]dy\oplus \CC[x,y,t]dt$.
By taking the dual functor $Hom_{\CC[x,y,t]}(\cdot, A)$,

\begin{eqnarray*}
0\longrightarrow T_Z \mapright{\tau_1} A \frac{\partial}{\partial_x}\oplus A\frac{\partial}{\partial_y}\oplus A\frac{\partial}{\partial_t}\mapright{\tau_2} Hom_{\CC[x,y,t]}(\cM, A).\\
\end{eqnarray*}

 By tensoring this sequence with $K_U=\CC[x,y,t]dx\wedge dy \wedge dt$ the map $\tau_1\otimes Id_{K_U}$
 is the same as $\phi_Z\vert_U$ after taking global sections. The assumption that $\sigma\vert_Z$ is in
 the image of $\phi_Z$ implies that the two-form $\sigma\vert_Z\vert_U=dx\wedge dy$ is inside the
 image of $\tau_1\otimes Id_{K_U}$. Clearly this elements comes from the vector field $\frac{\partial}{\partial_t}$
 tensored with the 3-form $dx\wedge dy\wedge dz$ of $K_U$. So the assumption implies $\frac{\partial}{\partial_y}$
 lies in the image of $\tau_1$, which is equivalent to say $\tau_2(\frac{\partial}{\partial_t})=0$.\\

 Now  $\tau_2(\frac{\partial}{\partial_t})(df)=0$ for all $f\in \I$. By writing
 $f=g_0(x,y)+g_1(x,y)t+g_2(x,y)t^2\cdots$ one has 

 \begin{eqnarray*}
0&=&\tau_2(\frac{\partial}{\partial_t})(df)=df(\frac{\partial}{\partial_t})\\
&=&[dg_0+(dg_1)t+(dg_2)t^2+(dg_3)t^3\cdots ](\frac{\partial}{\partial_t})+ [g_1dt+g_2\cdot 2tdt+g_3\cdot 3t^2dt+\cdots](\frac{\partial}{\partial_t})\\
&=&g_1+2g_2t+3g_3t^2+\cdots,
 \end{eqnarray*}

 which implies $g_1=g_2=g_3=\cdots=0$. This is the same as saying that the part of the subscheme
 $Z$ near $q$ is $\CC^{\ast}$-equivariant.
\end{proof}

\begin{coro}
Let $\sigma$ be a holomorphic two-form on $S$ with smooth zero loci $C$. Suppose the horizontal component of $\beta$ in $H^2(S,\ZZ)$ is not a multiple of $C$, then one has ${I}_n(X,\beta)_{\CC^{\ast}}^{vir}=0$ and so ${I}_n(X,\beta)^{vir}=0$
\end{coro}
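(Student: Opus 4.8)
The plan is to deduce the corollary from the degeneracy-loci characterization of the preceding proposition together with a homology computation, and then to feed the outcome into the general vanishing mechanism of Proposition 4.2. Concretely, I will argue that the hypothesis on the horizontal part $\beta_h$ forces the degeneracy loci of $\xi$ inside $I_n(X,\beta)$ to be empty, after which Corollary (2.4) and the virtual localization formula finish the argument.

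First I would take an arbitrary geometric point $x\in I_n(X,\beta)$ lying in the degeneracy loci and apply the preceding proposition to the subscheme $Z=\Z|_x$. This expresses $Z$ as a disjoint union of components of two types. For a second-type component the support is a union of fibers over points of $S\setminus C$; each such fiber is the fiber class $F$ of the scroll $X\to S$, whose horizontal component in $H^2(S,\ZZ)$ is zero. For a first-type component the reduced part lies in the sub-scroll $D=P(\sO_C\oplus L|_C)$, and the scroll projection $X\to S$ restricts on $D$ to the bundle projection $D\to C$. Hence every first-type component maps into $C$: a component dominating $C$ has horizontal class $d[C]$ for some $d\geq 1$, while a component contracted by $D\to C$ is again a fiber and contributes $0$.

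Adding the contributions over all components then shows that the horizontal part $[Z]_h=\beta_h$ of any $Z$ in the degeneracy loci is an integer multiple of $[C]$. Taking the contrapositive, if $\beta_h$ is not a multiple of $[C]$ then no subscheme of class $\beta$ can lie in the degeneracy loci, so that locus is empty. By Proposition 4.2 the cosection $\xi$ is then a surjection and $[I_n(X,\beta)]^{vir}=0$. Since $\xi$ is equivariant and, over the fixed locus, agrees with the cosection of Section 3, the same emptiness on $I_n(X,\beta)^{\CC^\ast}$ together with Corollary (2.4) yields $[I_n(X,\beta)^{\CC^\ast}]^{vir}=0$, and the virtual localization formula recovers the full vanishing.

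The hard part will be the homology bookkeeping for the first type: I must verify that every curve whose reduced support sits in $D$ really projects to a multiple of $[C]$ in $H^2(S,\ZZ)$, and that no embedded or non-reduced structure can smuggle in a horizontal class transverse to $[C]$. This reduces to the fact that $D\to C$ factors the scroll projection, so the image $1$-cycle is supported on the curve $C$ and its class is forced to be a multiple of $[C]$; once this is pinned down, the decomposition of the preceding proposition leaves no room for any other horizontal contribution.
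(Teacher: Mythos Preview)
Your proposal is correct and follows essentially the same approach as the paper: both argue, via the preceding proposition's structural decomposition of the degeneracy locus, that any $Z$ in the degeneracy locus must have horizontal class a multiple of $[C]$, so under the hypothesis the degeneracy locus is empty and the vanishing follows. Your write-up is more explicit about the homology bookkeeping (fiber components contribute zero horizontally, first-type components have one-dimensional irreducible pieces contained in $D$ and hence project into $C$), whereas the paper compresses this into a single sentence and appeals directly to the localized virtual cycle construction of Kiem--Li; the content is the same.
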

\begin{proof}
By construction in \cite{Jli} the localized virtual circle could be constructed in the degeneracy loci. The proposition 4.2 shows for $Z$ in degeneracy loci the horizontal component of $Z$ is a multiple of $C$. 
\end{proof}

 The above characterizes the behavior of subschemes $Z$ which contributed to the virtual
 cycle. In view of this we will define the the localized Donaldson Thomas invariant for $\Po$ scrolls over $S$. The invariant only depends on the $\Po$ scroll over a small neighborhood of the canonical curve $C$ in $S$.

\section{\bf Two-form localization for $\Po$ scroll}

 \,\,Two-form localization is a localization method applied to smooth varieties equipped with a
 global holomorphic two-form. Most varieties of general types satisfy this condition. The localization consists of two parts. First one shows the contribution of the holomorphic curves vanishes when the curve is away from the degeneration loci of the two-form. 
 Secondly one shows the invariant is the same as the localized invariant defined for an open analytic neighborhood of the degeneracy loci of the two form. The analytic treatment originates from Thom Parker's symplectic approach to compute the Gromov Witten invariants of a $p_g>0$ surface. In \cite{tom} T, Parker and J.H, Lee used the holomorphic two-form to perturb the original integrable complex structure $J$ to another non-integrable one $J^{\prime}$ so that any pseudo-holomorphic curve with respect to $J^{\prime}$ is a holomorphic curve with respect to original $J$ and $\textbf{\textit{lies in the degeneration loci of the two-form}}$.  In surface case this implies the only curves that contribute to the Gromov-Witten invariants are the degeneration
 loci or its multiples, and one can compute the contribution from the neighborhood of the loci, which is analytically
 the normal bundle of the curve.\\

 J. Li and Y.H. Kiem constructs the localization for the same problem from algebraic side.
 In \cite{Jli} they do not perturb the complex structure but instead build a map from the
 obstruction sheaf to the structure sheaf over the moduli of stable maps. Then the intrinsic cone lies in
 the kernel of the map (lemma (\ref{cosection})) and the the zero locus of cosection parametrizes those mapping to the canonical
 curve $C$. They pick a metric on the bundle resolution of the obstruction sheaf.  The metric induces an $C^{\infty}$ inverse of the cosection which intersects the cone only above the degeneration loci. They use the Gysin map to
 build the localization scheme.\\

 The same method would apply to higher dimensional case. The Gromov-Witten invariants
 of a variety $M$ with a two-form $\sigma$ are contributed only by those stable maps over which the
 corresponding cosection $Ob\rightarrow \sO_M$ is zero; and these stable maps are $\bf{almost}$
 those maps to the zero loci of $\sigma$. According to MNOP conjectures on GW-DT correspondence one would
 expect this applies to DT theory for any three-fold with a two-form, for example a $\Po$ scroll over a $p_g>0$
 surface. In DT case the vanishing property is proved in previous sections. However the analogy of localization to the $\theta$-neighborhood does not follows the GW case directly because of the $\Po$-fiber class. The point where the cosection of
obstruction sheaf degenerates corresponds to subschemes which may have the fiber components roaming far away from the canonical curve. We overcome this difficulty and define the localized Donaldson Thomas invariant of $\Po$-scroll over surface with $p_g>0$. We show that it depends only on the neighborhood of the $\Po$ scroll over the canonical curve.

\subsection{Localize to $\theta$ neighborhood}

 In \cite{Jli} the localization principle asserts that the whole GW-invariants of the surface $S$ with canonical
curve $C_g$ can be completely determined by the "theta-neighborhood" of $C_g$. The ''theta-neighborhood`` of $C_g$ in $S$ is isomorphic to a theta line bundle of $C_g$. The deformation class of the curve with the theta line bundle depend on $K_S\cdot K_S$ and $(p_g\mod 2)$ (ref \cite{tom} and \cite{Jli}). We will prove the following:\\

\noindent{\bf Claim}:{\,\, \sl Let $S$ be a surface with $p_g>0$ and Let $X$ be the compactification of a line bundle
$L$ over $S$. Then the cycle $[I_n(X,\beta)]^{vir}$ can be constructed from the $\theta$ neighborhood of $L$ over
the canonical curve $C$.}\\

  The construction is as follows. First one picks up a neighborhood $U$ of the curve $C$ in $S$ which is
 small enough so that it is analytically isomorphic to the collection of points on the normal bundle
 of $C$ ins $S$ whose distance from $C$ is $1$ under some metric $g$. Take a smaller neighborhood $V$
 which consists of points with distance $1/2$ from the $C$ and denote it by $V$. \\

 Given an arbitrary nonnegative integer $k$, let $I_n(U,V,\beta-kF) $ be the analytic open subset
 of $I_n(X,\beta-kF)$ which consists of subschemes $Z$ satisfying the following two condiitons:\\
  $(a)$: \textit{The support of $Z$ is in} $\pi^{-1}(U)$\\
  $(b)$: \textit{Every connected component of} $Z$ \textit{that intersects} $\pi^{-1}(U-V)$
  \textit{has its ideal sheaf equals} $\pi^{\ast}I_T$\textit{ where }$T$ \textit{is a subscheme of points in }$U$
  \textit{and} $I_T$ \textit{its ideal sheaf in }$\sO_U$; \textit
  {this is equivalent to say the part of the subscheme is $\CC^{\ast}$-equivariant}.\\

  Denote this sequence of open subsets by $B_{k}$ where $k=0, 1,2,3...$. On the other hand,
  let another analytic open set of $I_n(X,\beta)$ be the collection of  subschemes $Z$ that only
  satisfy the following conditions analogous to (b) above:\\
  $(b^{\prime})$: \textit{Every connected component of} $Z$ \textit{that intersects} $\pi^{-1}(S-V)$
  \textit{has its ideal sheaf equals} $\pi^{\ast}I_T$ \textit{where} $T$ \textit{is a subscheme of points in}
  $S$ \textit{and} $I_T$ \textit{its ideal sheaf in} $\sO_S$.\\

  Denote this open subset of $I_n(X,\beta)$ by $B$. By proposition 4.2 and the fact that any small
  perturbation of a $\CC^{\ast}$-equivariant scheme is still $\CC^{\ast}$-equivariant, one deduces that
  $B$ is an open neighborhood of the degeneracy loci of the cosection given in $\xi$. From the
  two-form localization principle (\cite{Jli1}\cite{Jli}) the virtual cycle of $I_n(X,\beta)$
  can be constructed by intersecting (after small perturbation) a $C^{\infty}$ section of the
  obstruction sheaf over $B$ with the normal cone where the section must be a lift of $\xi$
  away from degeneracy loci. The same procedure does not work for $B_{k}$ but after being modified as
  follows, gives us a localization method for this version.\\

   There is a map of sets (even not continuous) from $B$ to disjoint union of $B_k$:
  \begin{eqnarray*}
  r_U:B\longrightarrow \bigsqcup_k B_k,
  \end{eqnarray*}
  acts on a subscheme by forgetting its part outside $\pi^{-1}(U)$.
  Such a map is well defined because of the condition $(b)$ and $(b)^{\prime}$. It is not continuous
  because one can pick a $\CC^{\ast}$-equivariant fiber component over a point $q$ on $S$ and let $q$
  approach $\partial U$ in $S$. To make $r_U$ a continuous map one needs to add some open sets in the
  topology of $\bigsqcup_k B_k$. There is clearly a canonical choice of such topology because each
  neighborhood of any point $Z$ in $B_k$ is a product of the $\CC^{\ast}$ fibers configuration
  (which is a smooth manifold as the hilbert scheme of points on the surface $S$) and the perturbation of
  subschemes in $\pi^{-1}(U)$ (which is not smooth but an analytic space). One would refer to
  $\bigsqcup_k B_k$ as the same disjoint union but with this new canonical topology. Then the map $r_U$
  is continuous. \\

   We define ''topological-analytical mixed space" as those glued by charts which
  are products of a topological space $T$ and analytic spaces $W$, and restrict the gluing homeomorphism
  $g_{ij}$ from $U_i=T_1\times W_1$ to $U_j=T_2\times W_2$ to be of the form

  \begin{eqnarray*}
  &W_1=W_0\times \widehat{W};\,\,\,\, g_{ij}=u_{ij}\times w_{ij}\\
&w_{ij}: W_0\mapright{~} W_2\\
&u_{ij}:T_1\times \widehat{W}\mapright{~} W_1,
\end{eqnarray*}
 $Z_{DT}(X, \prod\tau_{0}(\gamma_i))_\beta$
 where $w_{i,j}$ is biholomorphic and $u_{i,j}$ is homeomorphic. A coherent sheaf on this kind of space
 is defined by gluing sheaves over each analytic components $W$ by $w_{ij}$. By definition the space
 $\bigsqcup_k B_k$ is automatically such a "topological-analytical mixed space", which will still be denoted by $B$.
  On the other hand the obstruction sheaf comes from deforming ideal sheaves in $\pi^{-1}V$ and hence
  is a coherent sheaf over this space. It is then by definition that the obstruction sheaf of $B$ is
  the same os the pull back of the obstruction sheaf of $\bigsqcup_k B_k$ by $r_U^{\ast}$. The cosection
   is also compatible and one can check \textit{the degeneracy loci of $\bigsqcup_k B_k$ is compact}. Now we
   can prove the claim:

 \begin{defiprop}
 For a theta neighborhood $U$ of a smooth genus curve $C$ and a line bundle $L$ on $C$. Given
 $\beta \in H_{\ast}(P(L\oplus \sO_C))$, the localized virtual fundamental class,
$[B]^{vir}\in H^{BM}_{\ast}(B)$ is defined to be the intersection of the intrinsic normal cone with the zero
section inside the body of the obstruction sheaf $Ob$ over $B$. The intersection is constructed by perturbing the
zero section nearby the degeneracy loci and then intersect in the same way as (2.5) in \cite{Jli}. If $U$ is the
neighborhood of the canonical curve $C$ in $S$, then
$r_U^{\ast}([B]^{vir})=[I_n(X,\beta)]^{vir}$.
\end{defiprop}

  Denote $Z\subset I_n(X,\beta)\times X$ the universal subscheme and $I$ the universal ideal sheaf.
  Let $\widehat{U}$ be the one point compactification of $U$. Denote the point in $\widehat{U}-U$ by $p_0$.
  Over $B$ there is a universal ideal sheaf $\widehat{I}$ and a universal "subscheme"
  $\widehat{\Z}$ inside the space
  $B\times \pi^{-1}(\widehat{U})$, where one realizes the fibers $F_p$ converge to the
  same fiber $\pi^{-1}(p_0)$ when $p$ converges to $p_0$.
  For $\hat{\gamma}\in H^{\ast}(\pi^{-1}(\widehat{U}))$, let $ch_{k+2}(\hat{\gamma})$ be the
  following operation on the homology of $B$:

  \begin{eqnarray*}
  ch_{k+2}(\hat{\gamma}): H_{\ast}(B,Q)\rightarrow H_{\ast-2k+2-l}(B,Q),\\
  \,\,\,ch_{k+2}(\hat{\gamma})(\alpha)=\pi_{1 \ast}(ch_{k+2}(I)\cdot \pi_2^{\ast}(\hat{\gamma})\cap \pi^{\ast}_1(\alpha)).
  \end{eqnarray*}

  Define the DT invariant:
\begin{defi}
\begin{equation*}
\<\tilde{\tau}_{k_1}(\hat{\gamma}_1)\ldots \tilde{\tau}_{k_r}(\hat{\gamma}_{r})\>_{n,\beta}^{\pi^{-1}(U),
loc}:=\prod_{i=1}^r (-1)^{k_i+1}ch_{k_{i}+2}(\hat{\gamma}_i)([B]^{vir})
\end{equation*}
\end{defi}

 Then
 \begin{prop}
 Suppose $L=\sO(mC)$ for some $m$. Further assume $\beta$ is in $H_{\ast}(P(L\oplus \sO_C))$ and
 $\{\hat{\gamma}_i\}$ are Poincar\'e dual of homology classes $\check{\gamma_i}\in P(L\oplus \sO_C)$ in $\pi^{-1}(\widehat{U})$.
 Then (prime fields) localized DT invariant
 $\<\tilde{\tau}_{0}(\hat{\gamma}_1)\ldots \tilde{\tau}_{0}(\hat{\gamma}_{r})\>_{n,\beta}^{\pi^{-1}(U), loc}$
 are the same as the original invariant
 $\<\tilde{\tau}_{0}(\hat{\gamma_1})\ldots \tilde{\tau}_{0}(\hat{\gamma_{r}})\>_{n,\beta}^{X}$, where ${\hat{\gamma_i}}$ are
 Poincar\'e dual of $\check{\gamma_i}$ in $X$.
\end{prop}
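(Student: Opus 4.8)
The plan is to reduce the statement to the identity $r_U^\ast([B]^{vir})=[I_n(X,\beta)]^{vir}$ of the preceding Definition-Proposition, combined with the observation that, because every insertion is a $\tilde\tau_0$ and each $\hat\gamma_i$ is Poincar\'e dual to a class $\check\gamma_i$ supported in the central scroll $P(L\oplus\sO_C)\subset\pi^{-1}(U)$, the descendant operation $ch_2(\hat\gamma_i)$ only tests the universal sheaf over the core $\pi^{-1}(C)$, where passing from $X$ to the $\theta$-neighborhood through $r_U$ changes nothing. Thus the whole argument is a compatibility of $ch_2(\hat\gamma_i)$ with $r_U$ under the support hypothesis on the $\check\gamma_i$.

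First I would compare the two universal families. On $I_n(X,\beta)\times X$ we have the universal ideal sheaf $I$, while on $B\times\pi^{-1}(\widehat U)$ the Definition-Proposition supplies $\widehat I$, obtained from $I$ by forgetting the components of $Z$ outside $\pi^{-1}(U)$ and letting the surviving fibre components limit to $\pi^{-1}(p_0)$. For a point $[Z]$ in the open neighbourhood $B\subset I_n(X,\beta)$ the subscheme over $r_U([Z])$ is $Z\cap\pi^{-1}(U)$, which agrees with $Z$ over $\pi^{-1}(C)$ since $C\subset U$. Hence $ch_2(I)$ and $r_U^\ast ch_2(\widehat I)$ coincide after restriction to $\pi^{-1}(C)$, the discrepancy between them being concentrated over the forgotten pieces and over the added point $p_0$.

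Next I would use the structure of the degeneracy loci (Proposition 4.2): the subschemes carrying $[B]^{vir}$ split into a part whose reduced support lies in $P(\sO_C\oplus L|_C)$ and $\CC^\ast$-equivariant fibre components over points outside $C$, the latter being exactly the roaming pieces responsible for the discrepancy above. Since $\check\gamma_i\in P(L\oplus\sO_C)$ lies over $C$, the factor $\pi_2^\ast(\hat\gamma_i)$ is supported over $\pi^{-1}(C)$ and annihilates those roaming contributions, so that $ch_2(I)\cdot\pi_2^\ast(\hat\gamma_i)$ and $r_U^\ast\bigl(ch_2(\widehat I)\cdot\pi_2^\ast(\hat\gamma_i)\bigr)$ agree on the support of the virtual class. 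Applying $\pi_{1\ast}$ and the identity $r_U^\ast([B]^{vir})=[I_n(X,\beta)]^{vir}$ then yields, for each $i$,
\[
ch_2(\hat\gamma_i)\bigl([I_n(X,\beta)]^{vir}\bigr)=ch_2(\hat\gamma_i)\bigl([B]^{vir}\bigr),
\]
where on the left $\hat\gamma_i$ is the Poincar\'e dual of $\check\gamma_i$ in $X$ and on the right in $\pi^{-1}(\widehat U)$. Taking the product over $i$ with the normalising signs $(-1)^{0+1}$ turns the left-hand side into $\<\tilde\tau_0(\hat\gamma_1)\cdots\tilde\tau_0(\hat\gamma_r)\>^X_{n,\beta}$ and the right-hand side into the localized invariant, which is the assertion.

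The hard part will be making the compatibility of $ch_2(\widehat I)$ with $r_U$ rigorous over the topological-analytical mixed space $\bigsqcup_k B_k$: one must check that $ch_2(\widehat I)$ is a well-defined operation there (the sheaf being glued only along the analytic factors), that the limiting fibre configuration at $p_0$ contributes nothing once capped with $\pi_2^\ast(\hat\gamma_i)$ because $\hat\gamma_i$ is supported away from $p_0$, and that the non-continuity of $r_U$ repaired by the mixed topology does not disturb the pushforward $\pi_{1\ast}$. The hypothesis $L=\sO(mC)$ enters precisely here: it guarantees that $X$ is proper, so that both invariants are genuine numbers, and that $\beta$ together with the $\check\gamma_i$ are honest classes on the central scroll $P(L\oplus\sO_C)$, making the two descendant operations comparable term by term.
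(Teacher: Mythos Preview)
Your overall strategy --- reduce to $r_U^\ast([B]^{vir})=[I_n(X,\beta)]^{vir}$ from the preceding Definition--Proposition and then check that each $ch_2(\hat\gamma_i)$ commutes with $r_U$ because the insertions are supported over $\pi^{-1}(C)$ --- is sound and close in spirit to the paper's. However, your explanation of where the hypothesis $L=\sO(mC)$ enters is wrong, and this is not a cosmetic slip: it is exactly the technical point on which the comparison hinges.

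You write that $L=\sO(mC)$ ``guarantees that $X$ is proper''. But $X=P(L\oplus\sO_S)$ is proper for \emph{any} line bundle $L$ on a projective surface $S$, so this cannot be the content of the hypothesis. The actual role of $L=\sO(mC)$ is that $L$ is then canonically trivial on $S\setminus C$, so the $\Po$-bundle $X\to S$ is canonically the product $\Po\times(S\setminus C)$ outside $C$. This is what makes the one-point compactification $\pi^{-1}(\widehat U)$ into an honest $\Po$-bundle over $\widehat U$ (one adds a single $\Po$ over $p_0$), and --- crucially --- it produces a continuous collapsing map
\[
\mu\colon X\longrightarrow \pi^{-1}(\widehat U)
\]
sending every fibre outside $U$ to the fibre over $p_0$. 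Without $\mu$ your two Poincar\'e duals of the same cycle $\check\gamma_i$, one taken in $X$ and one in $\pi^{-1}(\widehat U)$, are cohomology classes on different spaces with no a~priori relation; with $\mu$ one has $\gamma_i=\mu^\ast\hat\gamma_i$, which is precisely the bridge you need between the two descendant operations.

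The paper then runs the comparison functorially rather than by a support argument: setting $\phi=r_U\times\mu$, one uses that for $\tau_0$ the relevant Chern character is the cycle class $ch_2(I)=[Z]$, checks $\phi_\ast[Z]=[\widehat Z]$, and an induction via the projection formula gives $\varsigma_{i+1}=r_U^\ast\hat\varsigma_{i+1}$ directly. Your support-based reasoning could be made to work along these lines, but as written it leaves the essential step --- matching the two Poincar\'e duals across $X$ and $\pi^{-1}(\widehat U)$ --- unaddressed, and misattributes the one hypothesis that makes that step possible.
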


 \begin{proof}
 Since $L=\sO(mC)$ is trivial outside $C$, the space $\pi^{-1}(U)$ can be canonically compactified by adding one
 additional $\Po$. The space is a $\Po$ bundle over $\widehat{U}$ and would be denoted by $\pi^{-1}(\widehat{U})$. There
 is a continuous map $\mu:X\rightarrow \pi^{-1}(\widehat{U})$ by identifying all $\Po$ outside $U$ to the single additional
 $\Po$. One can take the Poincar\'e dual of homology classes $\check{\gamma_i}$ from $P(L\oplus \sO_C)$ either in $X$ or in
 $\pi^{-1}(\widehat{U})$, and the resulting cohomology classes ,$\{\hat{\gamma}_i\}$ and $\{\gamma_i\}$ would corresponds to each other under the
 map $\mu^{\ast}$, that is $\gamma_i=\mu^{\ast}(\hat{\gamma}_i)$.\\

 The following diagram commutes:
\begin{eqnarray*}
I_n(X,\beta) \leftarrow^{\pi} &Z\subset I_n(X,\beta)\times X &\mapright{\pi_2} X\\
\downarrow r_U\,\,\,\,\,\,\,\,\,\,\,\, &\downarrow \phi & \,\,\,\,\,\,\,\,\,\,\,\,\,\downarrow \mu\\
B\,\,\,\,\,\,\,\,\,\,\,\,\leftarrow &\widehat{Z}\subset B\times \widehat{U}&\mapright{\pi_2} \widehat{U},
\end{eqnarray*}
where $\phi=r_U\times \mu$.
Let
\begin{eqnarray*}
\varsigma_i&=&(-1)^{i+1}ch_{2}(\hat{\gamma}_{r-i})\circ \cdots \circ ch_{2}(\hat{\gamma}_r)([I_n(U,\beta)]^{vir} )\\
\hat{\varsigma}_i&=&(-1)^{i+1}ch_{2}(\gamma_{r-i})\circ \cdots \circ ch_{2}(\gamma_r)([B]^{vir} ).
\end{eqnarray*}
  Use induction and assume
$\varsigma_{i}=\gamma_U^{\ast}(\hat{\varsigma}_{i})$, we need to show $\varsigma_{i+1}=\gamma_U^{\ast}(\hat{\varsigma}_{i+1})$.
\begin{eqnarray*}
\varsigma_{i+1}&=&-\pi_{1\ast}(ch_2(I).\pi_2^{\ast}\gamma_{i+1}\cap \pi_1^{\ast}\varsigma_i)\\
&=&-\pi_{1\ast}(ch_2(I).\pi_2^{\ast}\mu^{\ast}\widehat{\gamma}_{i+1}\cap\pi_1^{\ast}\gamma_U^{\ast}\widehat{\varsigma_i})\\
&=&-\pi_{1\ast}(ch_2(I).\phi^{\ast}(\pi_2^{\ast}\widehat{\gamma}_{i+1}\cap\pi_1^{\ast}\widehat{\varsigma}_i))\\
&=&-\pi_{1\ast}([Z]\phi^{\ast}(\pi_2^{\ast}\widehat{\gamma}_{i+1}\cap\pi_1^{\ast}\widehat{\varsigma}_i))\\
&=&-\pi_{1\ast}\phi^{\ast}(\phi_{\ast}[Z].\pi_2^{\ast}(\widehat{\gamma}_{i+1}\cap\pi_1^{\ast}\widehat{\varsigma}_i))\\
&=&-\pi_{1\ast}\phi^{\ast}([\widehat{Z}].\pi_2^{\ast}(\widehat{\gamma}_{i+1}\cap\pi_1^{\ast}\widehat{\varsigma}_i))\\
&=&-\pi_{1\ast}\pi^{\ast}(ch_2(\widehat{I}).\pi_2^{\ast}(\widehat{\gamma}_{i+1}\cap\pi_1^{\ast}\widehat{\varsigma}_i)\\
&=&-\gamma_U^{\ast}\pi_{1\ast}(ch_2(\widehat{I}).\pi_2^({\ast}\widehat{\gamma}_{i+1}\cap\pi_1^{\ast}\widehat{\varsigma}_i)\\
&=&\gamma_U^{\ast}(\widehat{\varsigma}_{i+1})
\end{eqnarray*}
\end{proof}

 Here we expect the prime field condition can be dropped and apply to all descendant insertions but now we can only prove
 for prime field cases because we don't know the Poincar\'e dual of chern characters $ch_{k+2}(I)$ with $k>0$. On the other
 hand, if one starts with a $\theta$ line bundle, said $\theta$, of a smooth proper curve $C$ and choose a line bundle
 $\pi:L\rightarrow C$, then the localized Donaldson Thomas invariant of the open three-fold $\pi^{-1}(\theta)$ is defined
 in the same form, where one sets $U=\theta$ and pick arbitrary $V$. (The choice of $V$ will not affect the degeneracy loci and the localized
 virtual cycle. see \cite{Jli}). The number
 $\<\tilde{\tau}_{k_1}(\gamma_1)\ldots \tilde{\tau}_{k_r}(\gamma_{r})\>_{n,\beta}^{\pi^{-1}(U), loc}$
 is defined when $\gamma_i$s are Poincar\'e dual of homology classes from $H_{\ast}(P(L\oplus \sO_C))$ in
 $\pi^{-1}(\widehat{U})$.

\subsection{Deformation invariance}

 Let $S$ be a smooth open three-fold smooth over the affine line $T=Spec \,\CC$. $C=\bigcup_{t\in T} C_t$ is
 a smooth family of smooth compact curves inside $S$. Assume $S$ is contractible to $C$ and let $\widehat{S}$ be
 the fiberwise one point compactification of $S/T$. Further assume there is a relative holomorphic two-form
 $\sigma \in\Omega^2_{S/T}$ with degeneracy loci on each fiber equal to
 $C_t$. Also let $L=\sO(mC)$ be the line bundle defined by the divisor $mC$ on $S$ and the compactification of
 $L$ over $S$ to be the smooth four-fold $W$. Denote the corresponding line bundle over $\widehat{S}$ by $\widehat{W}$.
 Let the projection $W\rightarrow S$ by $\pi$. Pick up a smaller
 neighborhood $V$ of $C$ in $W$. One applies the above construction familywise to get a family of moduli space
 $B=\bigcup_{t\in T}{B_t}$. Let $\widehat{Z}\subset B\times \widehat{W}$ be the universal family
 of generalized subschemes parametrized by $B$. Here "generalized" means one mark the "nonreduced" structure of a subscheme
 along $\pi^{-1}(p_0)$ only by multiplicities. The global obstruction sheaf $Ob_{B}$ is equal to
 $Ext^2_{\pi_1}(I, I)_0$, where $\pi_1: B\times \widehat{W} \rightarrow B$ is the projection. The relative obstruction sheaf $Ob_{B/T}$
 would then be $Ext^2_{\widehat{\pi}_1}(I, I)_0$, where $\widehat{\pi}_1:B\times_T \widehat{W}\rightarrow B$ is also the
 projection. Similar to the ordinary case there is an exact sequence:
 \begin{equation}\lab{def}
\sO_B\mapright{\delta} Ext^2_{\widehat{\pi_1}}(I, I)_0 \longrightarrow Ext^2_{\pi_1}(I, I)_0.
 \end{equation}
 The sequence is constructed by Richard Thomas in lemma (3.42) in \cite{Casson}. Here we reproduce the proof via the language
 of derived category because it will be used in the proof of deformation invariance later.

\begin{lemm}
 Suppose $\iota:D\subset Y$ is a Cartier divisor in a quasi-projective scheme $Y$, with normal bundle $\nu=\sO_D(D)$.
 Then for coherent sheaves $\mathcal{E}$ and $\mathcal{F}$ on $D$ there is a long exact sequence
\begin{equation*}
\rightarrow Ext^i_D(\mathcal{E},\mathcal{F})\rightarrow
Ext^i_Y(\iota_{\ast}\mathcal{E},\iota_{\ast}\mathcal{F})\rightarrow
Ext^{i-1}_D(\mathcal{E},\mathcal{F}\otimes\nu)\mapright{\delta} Ext^{i+1}_D(\mathcal{E},\mathcal{F})
\end{equation*}
\end{lemm}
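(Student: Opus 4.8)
The plan is to pass to the derived category of $Y$ and to identify the middle group $Ext^i_Y(\iota_\ast\mathcal{E},\iota_\ast\mathcal{F})$ as the $\Ext$ of a two-term complex on $D$. First I would invoke the adjunction between the derived pullback $L\iota^\ast$ and $\iota_\ast$ for the closed immersion $\iota$, which gives a canonical isomorphism $R\Hom_Y(\iota_\ast\mathcal{E},\iota_\ast\mathcal{F})\cong R\Hom_D(L\iota^\ast\iota_\ast\mathcal{E},\mathcal{F})$. This rewrites every group $Ext^i_Y(\iota_\ast\mathcal{E},\iota_\ast\mathcal{F})$ intrinsically on $D$, as $\Ext$ into $\mathcal{F}$ out of the single object $L\iota^\ast\iota_\ast\mathcal{E}$.

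Second, I would compute $L\iota^\ast\iota_\ast\mathcal{E}$ locally. Because $D$ is Cartier, its ideal is locally generated by one non-zero-divisor $f$, so $\iota_\ast\sO_D$ is resolved by the length-one Koszul complex $\sO_Y(-D)\lra\sO_Y$ with differential multiplication by $f$. Tensoring with $\iota_\ast\mathcal{E}$ and using that $f$ annihilates $\mathcal{E}$, the differential becomes the zero map; hence the Tor sheaves are $\mathcal{T}or^{\sO_Y}_0(\iota_\ast\mathcal{E},\sO_D)=\mathcal{E}$ and $\mathcal{T}or^{\sO_Y}_1(\iota_\ast\mathcal{E},\sO_D)=\mathcal{E}\otimes\nu^{-1}$, with all higher Tor vanishing; the twist $\nu^{-1}=\sO_D(-D)$ comes from the $\sO_Y(-D)$ factor. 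Thus $L\iota^\ast\iota_\ast\mathcal{E}$ is concentrated in degrees $0$ and $-1$, with cohomology sheaves $\mathcal{E}$ and $\mathcal{E}\otimes\nu^{-1}$ respectively.

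Third, the canonical truncation of this two-term complex yields a distinguished triangle $\mathcal{E}\otimes\nu^{-1}[1]\lra L\iota^\ast\iota_\ast\mathcal{E}\lra\mathcal{E}\lra\mathcal{E}\otimes\nu^{-1}[2]$ in the derived category of $D$. Applying the contravariant functor $R\Hom_D(-,\mathcal{F})$ produces a distinguished triangle whose associated long exact cohomology sequence is precisely the asserted one: the term coming from $\mathcal{E}$ contributes $Ext^i_D(\mathcal{E},\mathcal{F})$, the term coming from $L\iota^\ast\iota_\ast\mathcal{E}$ contributes $Ext^i_Y(\iota_\ast\mathcal{E},\iota_\ast\mathcal{F})$ via Step one, and the term coming from $\mathcal{E}\otimes\nu^{-1}[1]$ contributes $Ext^{i-1}_D(\mathcal{E},\mathcal{F}\otimes\nu)$ once the shift $[1]$ and the line-bundle twist are absorbed ($\Hom$ out of a twist by $\nu^{-1}$ being the same as $\Hom$ into a twist by $\nu$). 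The boundary map of the triangle is exactly the connecting homomorphism $\delta\colon Ext^{i-1}_D(\mathcal{E},\mathcal{F}\otimes\nu)\to Ext^{i+1}_D(\mathcal{E},\mathcal{F})$.

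The main obstacle is Step two: confirming that $L\iota^\ast\iota_\ast\mathcal{E}$ really has cohomology in only the two expected degrees, and that the degree $-1$ sheaf is \emph{canonically} $\mathcal{E}\otimes\nu^{-1}$ as a global object on $D$, independently of the local choice of equation $f$. This is where the Cartier hypothesis is indispensable: without it the ideal of $D$ need not be locally principal, the Koszul resolution would be longer, and additional Tor terms would destroy the three-term periodicity of the sequence. I would finish by checking that the local identifications of $\mathcal{T}or_1$ with $\mathcal{E}\otimes\nu^{-1}$ glue, which holds because the transition functions relating the local equations $f$ over overlaps are precisely the cocycle defining $\nu=\sO_D(D)$; this gives the global isomorphism used to name the twist $\nu$ in the statement.
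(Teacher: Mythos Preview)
Your proposal is correct and follows essentially the same route as the paper: both arguments use the adjunction $R\Hom_Y(\iota_\ast\mathcal{E},\iota_\ast\mathcal{F})\cong R\Hom_D(L\iota^\ast\iota_\ast\mathcal{E},\mathcal{F})$, compute the cohomology of $L\iota^\ast\iota_\ast\mathcal{E}$ via the two-term resolution $\sO_Y(-D)\to\sO_Y$ of $\sO_D$, obtain the distinguished triangle $\mathcal{E}\otimes\nu^{-1}[1]\to L\iota^\ast\iota_\ast\mathcal{E}\to\mathcal{E}$, and then apply $R\Hom_D(-,\mathcal{F})$. The only cosmetic difference is that the paper works with the global sequence $0\to\sO_Y(-D)\to\sO_Y\to\sO_D\to 0$ from the outset (so the twist by $\nu^{-1}$ is global by construction), whereas you argue locally and then verify gluing; both are fine.
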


\begin{proof}
 First we check the exactness of the sequence on $D$
 \begin{equation}\lab{seq}
 0\longrightarrow E[1]\otimes_D \sO_D(-D)\longrightarrow L\iota^{\ast}(\iota_{\ast}E)\longrightarrow E\longrightarrow 0.
 \end{equation}
 Here $L\iota^{\ast}$ is the derived pullback and $\iota_{\ast}$ is the same as the derived pushforward
 because $\iota$ is an inclusion. Take a resolution $E.\rightarrow \iota_{\ast}E \rightarrow 0$ on $Y$,
 and tensor it with $\sO_D$ it becomes $E.\vert_D$ on $D$ whose cohomology
 computes $H^{\ast}(D, L\iota^{\ast}(\iota_{\ast}E))$. On the other hand the complex $E.\vert_D$ viewed as a complex on
 $Y$ is the same as $\iota_{\ast}E\otimes_Y^L\sO_D$ in the derive category over $Y$. Hence there is
 $$H^{\ast}(L\iota^{\ast}(\iota_{\ast}E))=\iota^{\ast}\iota_{\ast}H^{\ast}(L\iota^{\ast}(\iota_{\ast}E))=
\iota^{\ast}(H^{\ast}(\iota_{\ast}E\otimes^L_Y\sO_D)).$$ 
 Take the canonical resolution of
$\sO_D$ on $Y$: $0\rightarrow\sO_Y(-D)\rightarrow \sO_Y\rightarrow
\sO_D\rightarrow 0$. Take (nonderived) tensor of it with $\iota_{\ast}E$
\begin{equation*}
\iota_{\ast}E\otimes_Y \sO_Y(-D) \mapright{g} \iota_{\ast}E \mapright{f} \iota_{\ast}E\otimes
\sO_D
\end{equation*}
It is easy to check $g=0$ and $f$ is an isomorphism. Since the complex $[\iota_{\ast}E\otimes_Y \sO_Y(-D)
\mapright{g} \iota_{\ast}E]$ is how we define $\iota{\ast}E\otimes_Y^L\sO_D$, one has
\begin{eqnarray*}
H^{\ast}(L_i^{\ast}(\iota_{\ast}E))=\iota^{\ast}(H^{\ast}(\iota_{\ast}E\otimes^L_Y\sO_D))
=& & 0 \,\,\,\,\,\,\,\,\,\,\,\,\,\,\,\,\,\,\,\,\,\,\,\,\,\,\,\,\,\,\,\,\,\,\,\,\,\,\,\,\,\,\,         \ast \neq 0,-1\\
 & & E \,\,\,\,\, \,\,\,\,\,\,\,\,\,\,\,\,\,\,\,\,\,\,\,\,\,\,\,\,\,\,\,\,\,\,\,\,\,\,\,\,          \ast =0\\
 & & E\otimes\sO_D(-D)                                         \,\,\,\,\,\,\,\,\,\,          \ast =-1.
\end{eqnarray*}
So the exact sequence (triangle) (\ref{seq}) follows.\\
 For the lemma one simply takes $RHom_D(\cdot,F)$ to the sequence (\ref{seq}) and use 
 \begin{eqnarray*}
 &RHom_D(L\iota^{\ast}(\iota_{\ast}E),F)[i]& =\,\,\,\,RHom_X(\iota_{\ast}E,R\iota_{\ast}F)[i]\\
                   &\parallel& \,\,\,\,\,\,\,\,\,\,\,\,\,\,\,\,\,\,\,\,\,\,\,\,\,\,\,\,\,\,\,\,\,\,\, \parallel \\
 &Ext^i_D(L\iota^{\ast}(\iota_{\ast}E),F)& =\,\,\,\,\,\,\,\,\,\,Ext^i_X(\iota_{\ast}E,\iota_{\ast}F)
 \end{eqnarray*}

\end{proof}

 The sequence (\ref{def}) follows by applying the lemma to the divisor
 $D=B\times_T \widehat{W}\subset B\times \widehat{W}=Y$ relative to $B$. There is a criterion
 for the flatness of the localized virtual fundamental classes in $H_{\ast}^{BM}(B_{t})$ given by \cite{Jli}.
 The flatness follows if the family cosection $Ext^2_{\widehat{\pi_1}}(I, I)_0\rightarrow \sO_B$ lifts to
 $Ext^2_{\pi_1}(I, I)_0\rightarrow \sO_B$. The lifting exists if the composition of $\delta$
  in (\ref{def}) with the cosection $\xi$ is zero. We will prove it by interpreting it as the action of
  Kodaira Spencer class on the subscheme,\\

\noindent{\bf Claim}:{\sl \,\,\,\,\,The composition $\xi \circ \delta$ is zero.}\\

 It is enough to check the restriction of  $\xi \circ \delta$ at some $p\in B=\bigcup B_t$. Without loss of
generality assume $t=0$. Let $Z$ be a subscheme of $W_0$ corresponds to $p$. Suppose the ideal sheaf of $Z$ in
$W_t$ is $\I$ and the ideal sheaf of $Z$ in $W$ is $I^{\prime}$. The conormal
sheaves of $Z$ in $W_t$ and $W$ fit into an exact sequence:
\begin{equation}\lab{5.3}
0 \rightarrow \sO_Z \rightarrow \I/\I^2 \rightarrow I/I^2 \rightarrow 0
\end{equation}
It gives an element in $\kappa_Z\in Ext^1_{W_0}(I/I^2,\sO_Z)$. Also denote the extension class of the
sequence
\begin{equation}
0\rightarrow I \rightarrow \sO_{W_0} \rightarrow \sO_Z \rightarrow 0
\end{equation}

 by $\varsigma$. There is Yoneda product

\begin{equation*}
Ext^1_{W_0}(I/I^2,\sO_Z)\times Ext^1_{W_0}(\sO_Z, I)\rightarrow Ext^2(I/I^2, I).
\end{equation*}

and a canonical restriction $Ext_{W_0}^2(I/I^2,I)\rightarrow Ext^2_{W_0}(I, I)$.

\begin{lemm}\lab{Yoneda}
The Yoneda product of $(\kappa_Z,\varsigma)$  has its image in  $Ext^2_{W_0}(I, I)$ the same as $\delta(1)$.
\end{lemm}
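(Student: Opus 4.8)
The plan is to compute $\delta(1)$ explicitly as a Yoneda $2$-extension and to match it, term by term, with the splice of the conormal sequence (\ref{5.3}) and the structure sequence defining $\varsigma$. Recall that $\delta$ is the connecting homomorphism of the preceding Cartier-divisor lemma, applied to the fibre $W_0\subset W$ (cut out locally by the pullback $s$ of a coordinate on the base $T$, so that $\nu=\sO_{W_0}(W_0)$ is canonically trivial), and that $\delta(1)$ is the image of $\id_I\in\Ext^0_{W_0}(I,I\otimes\nu)$. By the construction of that sequence, $\delta$ is composition with the boundary morphism of the triangle (\ref{seq}); thus $\delta(1)$ is represented by a $4$-term exact sequence $0\to I\to M_1\to M_0\to I\to 0$, and the whole problem is to produce this representative geometrically.

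First I would restrict the structure sequence $0\to\I\to\sO_W\to\sO_Z\to 0$ of $Z$ in the total space $W$ to the fibre $W_0$. Since the defining equation $s$ of $W_0$ is a nonzerodivisor on $\I$ but annihilates $\sO_Z$, the only contribution is $\mathrm{Tor}_1^{\sO_W}(\sO_Z,\sO_{W_0})\cong\sO_Z\otimes\nu^{-1}$, and the restricted sequence reads $0\to\sO_Z\otimes\nu^{-1}\to\I\otimes\sO_{W_0}\to I\to 0$, the left-hand map being identified with the inclusion $\sO_Z\to\I/\I^2$ of (\ref{5.3}). In this way the connecting homomorphism attached to the derived restriction becomes cup product with the conormal (Kodaira--Spencer) class $\kappa_Z$. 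Splicing (\ref{5.3}) with the structure sequence $0\to I\to\sO_{W_0}\to\sO_Z\to 0$ along $\sO_Z$ then gives the $4$-term sequence
\begin{equation*}
0\longrightarrow I\longrightarrow\sO_{W_0}\longrightarrow\I/\I^2\longrightarrow I/I^2\longrightarrow 0,
\end{equation*}
which by definition represents $\kappa_Z\cdot\varsigma\in\Ext^2_{W_0}(I/I^2,I)$; pulling it back along the canonical surjection $I\to I/I^2$ yields a class in $\Ext^2_{W_0}(I,I)$.

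It then remains to identify this pulled-back class with $\delta(1)$. Concretely I would describe both sides by the ``lift-and-measure-the-defect'' recipe: choose local generators $f_i$ of $I$, lift them to $\tilde f_i$ on $W$, lift the syzygies, and read off the defect as a multiple of $s$. The part of the defect recording the syzygies of $Z$ reproduces the extension $\varsigma$, while the part recording the $s$-derivative of the lifts reproduces $\kappa_Z$; matching this cocycle with the boundary of (\ref{seq}) evaluated at $\id_I$ shows that $\delta$ factors as the two-step composite $\Hom(I/I^2,I/I^2)\xrightarrow{\cup\kappa_Z}\Ext^1(I/I^2,\sO_Z)\xrightarrow{\cup\varsigma}\Ext^2(I/I^2,I)$ followed by restriction along $I\to I/I^2$. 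Evaluating at $\id$ gives $\delta(1)=\kappa_Z\cdot\varsigma$.

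The main obstacle is exactly this last matching: showing that the abstractly defined connecting morphism of the divisor lemma coincides with the concrete geometric splice. The delicate points are the bookkeeping of the normal-bundle twist $\nu$ and its fibrewise trivialization --- needed so that $\id_I$ lives in $\Ext^0(I,I\otimes\nu)$ and the twists cancel in the final degree-two class --- and the comparison of connecting maps computed from two different but quasi-isomorphic complexes (the Koszul model of $\sO_{W_0}$ against a locally free resolution of $I$), which forces a careful choice of compatible resolutions and a single sign check in the Yoneda composition. Once the boundary of (\ref{seq}) is pinned down as the $s$-derivative map, the identification of its first factor with $\kappa_Z$ is the standard statement that the Kodaira--Spencer class of the family $W/T$, restricted to $Z$, is the conormal extension (\ref{5.3}).
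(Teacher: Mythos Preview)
Your proposal is correct and follows the same overall strategy as the paper: both identify the Yoneda product $\kappa_Z\cdot\varsigma$ with the explicit $4$-term splice
\[
0\longrightarrow I\longrightarrow \sO_{W_0}\longrightarrow \I/\I^2\longrightarrow I/I^2\longrightarrow 0,
\]
and both must then match this (after pullback along $I\to I/I^2$) with the abstract boundary $\delta(1)$ coming from the triangle~(\ref{seq}).

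The difference is only in this last matching step. You propose a local ``lift--and--measure--the--defect'' cocycle computation, which works but is heavier than necessary. The paper instead observes that $0\to\sO_W\xrightarrow{\cdot t}\I\to\iota_\ast I\to 0$ is already an $\iota^\ast$-acyclic resolution of $\iota_\ast I$ on $W$ (since $\mathrm{Tor}^W_i(\I,\sO_{W_0})=0$ for $i>0$), so that $L\iota^\ast\iota_\ast I$ is represented on the nose by the two-term complex $C^{\cdot}|_{W_0}=[\sO_{W_0}\xrightarrow{\cdot t}\I|_{W_0}]$. This gives $\delta(1)$ directly as the $4$-term sequence $0\to I\to\sO_{W_0}\to\I|_{W_0}\to I\to 0$, and the comparison with your splice is then the single commutative square induced by the surjections $\I|_{W_0}\twoheadrightarrow\I|_Z=\I/\I^2$ and $I\twoheadrightarrow I/I^2$. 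In other words, the explicit model for $L\iota^\ast\iota_\ast I$ replaces your cocycle argument and removes the need for the bookkeeping you flag (the $\nu$-twist, compatible resolutions, sign check); what you call ``the main obstacle'' simply does not arise.
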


\begin{proof}

Let the complex \,\,$\sO_{W}\mapright{\cdot t} \I$\, be denoted by $C^{\cdot}$. Tensoring
\,\,$0\rightarrow \sO_W(-W_0)\rightarrow \sO_W \rightarrow \sO_{W_0}\rightarrow 0$\, with
\,\,$\I$\, one has \,\,$L\iota^{\ast}[i](\I)=Tor^i_{W}(\I,\sO_{W_0})=0$ for
$i>1$.\, This shows the exact sequence \,\,$0\rightarrow \sO_{W}\mapright{\cdot t} \I
\rightarrow I \rightarrow 0$\, is a resolution of \,$I$ on \,$W$ that can be used to compute the derived functor
of \,$\iota^{\ast}(I)$, or equivalently, the complex $C^{\cdot}\vert_{W_0}$ is isomorphic to
$L\iota^{\ast}(\iota_{\ast}I)$. So $\delta(1)$ is also the map $I\rightarrow I[2]$ \,induced from the triangle
\,\,$0\rightarrow I[1] \rightarrow
C^{\cdot}\vert_{W_0} \rightarrow I \rightarrow 0$.

On the other hand, combine the (\ref{seq}) and (\ref{5.3}) one has:
\begin{equation*}
 0\rightarrow I \rightarrow \sO_{W_0}\mapright{\cdot t}
\I\vert_{W_0}=\I/\I^2 \rightarrow I/I^2\rightarrow 0.\\
 \end{equation*}

Denote the complex $\sO_{W_0}\mapright{\cdot t} \I\vert_{W_0}$ by $\tilde{C}^{\cdot}$. The
element $(\kappa_Z\vee\varsigma)$ under the Yoneda product is the same as the the map $I/I^2\rightarrow I[2]$
from
the exact triangle $0\rightarrow I \rightarrow \tilde{C}^{\cdot} \rightarrow I/I^2 \rightarrow 0$\\

 The above two sequences fit into the following diagram:

\begin{eqnarray*}
0 \rightarrow   I[1] \rightarrow  &\sO_{W_0}&  \mapright{\cdot t} \,\,\,\, \I\vert_{W_0}
\rightarrow
I \rightarrow 0 \\
  &\downarrow&\,\,\,\,\,\,\,\,\,\,\,\,\,\,\,\,\,\, \downarrow  \\
 0\rightarrow  I[1] \rightarrow &\sO_{W_0}& \mapright{\cdot t}
\,\,\,\,\I\vert_{Z} \rightarrow I/I^2 \rightarrow 0.
\end{eqnarray*}

from which the lemma follows.

\end{proof}

\begin{prop}\lab{5.6}
The composition $\xi \circ \delta$ is zero. Hence the localized virtual fundamental classes $[B_t]^{vir}$ are
constant in $t$ as classes in $H_{\ast}(B_{t}(\xi))$, where $B_{t}(\xi)$ is the degeneracy loci of the
cosection $\xi$ on $B_t$.
\end{prop}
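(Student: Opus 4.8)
The plan is to prove the vanishing $\xi\circ\delta=0$ by a pointwise computation, and then to feed this into the flatness criterion of \cite{Jli} to obtain the constancy of $[B_t]^{vir}$. Since a homomorphism $\sO_B\to\sO_B$ is the same datum as its value on $1$, it suffices to show that the section $\xi(\delta(1))$ of $\sO_B$ vanishes, and---because the obstruction theory is perfect and compatible with base change, as noted after Lemma \ref{iso}---this can be checked at each closed point $p\in B$. Translating in the base I may assume $p\in B_0$, corresponding to a subscheme $Z\subset W_0$. Here the essential input is Lemma \ref{Yoneda}: it identifies $\delta(1)$, inside $Ext^2_{W_0}(I,I)$, with the image of the Yoneda product $\kappa_Z\vee\varsigma$, where $\kappa_Z\in Ext^1_{W_0}(I/I^2,\sO_Z)$ is the extension class of the conormal sequence (\ref{5.3}) and $\varsigma\in Ext^1_{W_0}(\sO_Z,I)$ is the class of $0\to I\to\sO_{W_0}\to\sO_Z\to 0$. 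The point to keep in mind is that the factor $\sO_Z$ appearing in (\ref{5.3}) is the restriction to $Z$ of the conormal bundle $N^\vee_{W_0/W}=\sO_{W_0}(-W_0)$, which---$W_0$ being a fibre of $W\to T$ over a smooth base---is canonically trivialized by $dt$; thus $\kappa_Z$ records precisely the Kodaira--Spencer direction $\partial_t$ of the family $W/T$.

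With this in hand, the computation mirrors the local analysis of the degeneracy locus in Section 4. Via Lemma \ref{iso} and Serre duality on the three--fold $W_0$, the cosection $\xi|_Z$ is the functional on $Ext^3_{W_0}(\sO_Z,I)\cong Ext^2_{W_0}(I,I)_0$ obtained by pairing against the homomorphism $I\to\sO_Z\otimes K_{W_0}$ that $\sigma$ induces; in local coordinates $x,y$ on the surface fibre and $s$ on the vertical $\Po$, this homomorphism is built from $\pi^\ast\sigma=\sigma(x,y)\,dx\wedge dy$. Carrying the Serre pairing through the Yoneda product, the factor $\kappa_Z$ enters as the interior product along the normal direction of $W_0$ in $W$, that is as $\iota_{\partial_t}$, while $\sigma$ contributes only its $dx\wedge dy$ part. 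Because $\sigma$ is a \emph{relative} two--form, $\sigma\in\Omega^2_{S/T}$, any representative has no $dt$--component, so $\iota_{\partial_t}(\pi^\ast\sigma)=0$ and the pairing is identically zero. I expect the main obstacle to be exactly this step of bookkeeping: one must follow the three identifications---the isomorphism of Lemma \ref{iso}, Serre duality on $W_0$, and the Yoneda pairing of Lemma \ref{Yoneda}---with enough care to be sure that $\kappa_Z$ genuinely enters as contraction with $\partial_t$ and not as some residual horizontal operation, so that the relativeness of $\sigma$ is used in the right place.

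Granting $\xi\circ\delta=0$, deformation invariance is formal. By the exact sequence (\ref{def}), the vanishing of $\xi\circ\delta$ means the relative cosection $\xi:Ext^2_{\widehat{\pi_1}}(I,I)_0\to\sO_B$ lifts to an \emph{absolute} cosection $Ext^2_{\pi_1}(I,I)_0\to\sO_B$ of the obstruction sheaf of the total family. As recalled around (\ref{def}), this is precisely the hypothesis in the flatness criterion of \cite{Jli} under which the cosection--localized virtual cycles of the fibres assemble into a single family over $T$, supported on the (locally constant, compact) degeneracy loci $B_t(\xi)$. Applying that criterion, the classes $[B_t]^{vir}\in H_\ast(B_t(\xi))$ are all restrictions of one class flat over $T$, hence independent of $t$, which is the assertion of the proposition.
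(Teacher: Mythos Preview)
Your argument has a genuine gap at the crucial step. You claim that ``$\kappa_Z$ enters as the interior product along the normal direction of $W_0$ in $W$, that is as $\iota_{\partial_t}$,'' and then conclude that since the relative form $\sigma\in\Omega^2_{S/T}$ has no $dt$-leg, the contraction vanishes. This misidentifies what the Kodaira--Spencer class is. The class $\kappa$ is the image of $\partial_t\in H^0(N_{W_0/W})$ under the \emph{connecting homomorphism} $H^0(N_{W_0/W})\to H^1(T_{W_0})$; it is represented by a \v{C}ech cocycle of vector fields \emph{tangent to $W_0$}, not by the transverse field $\partial_t$ itself. Contracting such a tangent cocycle against $\sigma_0\in H^0(\Omega^2_{W_0})$ produces a class $\kappa\vee\sigma_0\in H^1(\Omega^1_{W_0})$ which has no reason to vanish---indeed this is exactly the first piece of the infinitesimal variation of Hodge structure, generically nonzero for a nontrivial family of surfaces with $p_g>0$. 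The relativeness of $\sigma$ only says that $\sigma$ restricts to a two-form on each fibre; it says nothing about how those restrictions vary with $t$, which is precisely what $\kappa$ measures.

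A symptom of the gap is that your argument never uses the hypothesis $\beta\in H_2(P(L\oplus\sO_C))$, nor that $C$ is the zero locus of $\sigma$. In the paper these are the decisive inputs. One first proves (Lemma~\ref{can}, via Grothendieck duality) that $\xi(\delta(1))$ equals the pairing $\int_\beta(\kappa\vee\sigma)$ of the class $\kappa\vee\sigma\in H^1(W_0,\Omega^1_{W_0})$ against the curve class $\beta$. The vanishing then comes not from any formal property of $\kappa$ or of relative forms, but from the geometric fact that $\sigma$ vanishes along $C$: hence $\pi^\ast\sigma$, and therefore $\kappa\vee\pi^\ast\sigma$, restrict to zero on $P(L|_C\oplus\sO_C)$, so the integral over any $\beta$ supported there is zero. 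Your proposal bypasses both this identification and this geometric step; to repair it you would need to supply both.
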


\begin{proof}
In the proof we omit the base which all the cohomology are taken over and always set it to be $W_0$. Following
the notation the previous lemma, Let $\kappa\in H^1(W_0, T_{W_0})$ be the Kodaira Spencer class of $W_0$ in
$W_t$. It is clear that the image of $\kappa$ under the sequence of maps

\begin{eqnarray*}
&&H^1(W_0, T_{W_0}) \rightarrow H^1(Z,T_{W_0}\vert_Z) \rightarrow H^1(Z, N_{Z\subset W_0})\\
&&=H^1(W_0,\mathcal{H}om(I/I^2,\sO_Z))\rightarrow Ext^1_{W_0}(I/I^2,\sO_Z),
\end{eqnarray*}
is the same $\kappa_Z$. So $\delta(1)$ comes from the composition of the Kodaira Spencer class
$\kappa$ with the canonical element $\varsigma=[\sO_{W_0}]\in Ext^1(\sO_Z,I)$. 

\begin{lemm} \lab{can}
The image of $\delta(1)$ under the map
\begin{equation*}
Ext^2_{W_0}(I,I)\rightarrow Ext^3_{W_0}(\sO/I,I)=\Gamma(W_0, N_{Z\subset W_0}\otimes K_{W_0})^{\vee}
\end{equation*}

acts on the holomorphic two-form

\begin{equation*}
\sigma\in \Gamma(W_0,T_{W_0}\vert_Z\otimes K_{X})\rightarrow \Gamma(W_0,N_{Z\subset W_0}\otimes K_{W_0})
\end{equation*}

by contractions
\begin{equation*}
H^1(W_0,T_{W_0}\vert_Z)\times H^0(W_0,\Omega^2\vert_Z)\rightarrow H^1(W_0,\Omega^1\vert_Z)\mapright{\int} \CC,
\end{equation*}
which is the integral of the $\kappa\vee\sigma\in H^1(W_0, \Omega^1_{W_0})$ over the homology class of $\beta$.
\end{lemm}

\begin{proof}
(The author thanks Professor Brian Conrad for the help about Grothendieck duality.)
(1) Let $\iota:Z \hookrightarrow W_0$ be the inclusion and $\pi: W_0\rightarrow \text{Spec}(\CC)$. Denote $\psi=\iota\circ\pi:Z\rightarrow \text{Spec}(\CC)$. The dualizing complex of $Z$ is $w_Z=\psi^!(\sO_{\text{Spect}(\CC)})=\iota^!(K_{W_0}[3])$. There is 
a canonical map $\Omega^1_Z[1]\rightarrow w_Z$ 
and an integration map 
$$H^0(Z,w_Z)\rightarrow \CC$$ by Grothendieck duality. There is also a map following section 3.5 in \cite{Conrad} 
$$\mathcal{E} xt^2_{W_0}(O_Z, K_{W_0})[1] \rightarrow i^!(K_{W_0}[3])=w_Z$$.

On the other hand, the image of $(\sO_{W_0},\kappa\vee \sigma,\sO_{W_0})$ under the composition of 
\begin{eqnarray*}
Ext^1(\sO_Z,I)\times H^1(\mathcal{H}om(I,\sO_Z)\otimes\Omega^2_{W_0})\times
Ext^1(\sO_Z,I)\rightarrow H^1(\mathcal{E}xt^2(\sO_Z,I\otimes\Omega^2_{W_0}))
\end{eqnarray*}
with $I\otimes\Omega^2_{W_0}\mapright{d\times \text{id}}\Omega^1_{W_0}\otimes \Omega^2_{W_0}=K_{W_0}$
is an element in $H^1(\mathcal{E}xt^2(\sO_Z,K_{W_0}))$. If one maps it further to $H^0(Z, w_Z)$, it follows from naturality that the image would be the same as the image of $\kappa\vee\sigma$ under the map $H^0(W_0,\Omega^1_{W_0}|_Z)\rightarrow H^1(W_0,\Omega^1_Z)\rightarrow H^0(Z,w_Z)$.\\
(2)
The element $\xi \circ \delta(1)$ is obtained as follows. First there is a map:
\begin{eqnarray*}
\mathcal{E} xt^1(\sO_Z,I)\times H^1(\mathcal{H} om(I,\sO_Z))\times
\mathcal{E} xt^1(\sO_Z,I)\rightarrow H^1(\mathcal{E} xt^2(\sO_Z,I))
\end{eqnarray*}

 The image of
$(\sO_{W_0},\kappa,\sO_{W_0})$ ends in $H^1(\mathcal{E}xt^2(\sO_Z,I))$.
Now combining this with:
\begin{equation*}
 \sigma\in \Omega^2_{W_0} \text{\,\,\,\,implies\,\,\,\,} I\lra K_{W_0}
\text{\,\,\,\,implies\,\,\,\,}
\mathcal{E}xt^2(\sO_Z,I)\lra\mathcal{E}xt^2(\sO_Z,K_{W_0}).
\end{equation*}
 gives an element in
$H^1(\mathcal{E}xt^2(\sO_Z,K_{W_0}\otimes\sO_Z))$. By the local to global sequence and dimension
reasons there is
$H^1(\mathcal{E}xt^2(\sO_Z,K_{W_0}\otimes\sO_Z))\rightarrow Ext^3(\sO_Z,\sO_Z\otimes
K_{W_0})=\CC$. The final elemnt in $\CC$ is our $\xi\circ \delta(1)$ by lemma (\ref{Yoneda}).\\

 The claim now follows from the following two diagrams:

\begin{eqnarray*}
&Ext^1(\sO_Z,I)\times H^1(\mathcal{H}om(I,\sO_Z)\otimes\Omega^2_{W_0})\times
Ext^1(\sO_Z,I)&\rightarrow H^1(\mathcal{E}xt^2(\sO_Z,I\otimes\Omega^2_{W_0}))\\
&\uparrow&\,\,\,\,\,\,\,\,\,\,\,\,\,\,\,\,\,\uparrow\\
&\mathcal{E} xt^1(\sO_Z,I))\times H^1(\mathcal{H} om(I,\sO_Z))\times
\mathcal{E} xt^1(\sO_Z,I))&\rightarrow H^1(\mathcal{E} xt^2(\sO_Z,I)),
\end{eqnarray*}
and
\begin{eqnarray*}
&H^1(\mathcal{E}xt^2(\sO_Z,K_{W_0}))&\rightarrow \,\,\,\,\,\,\,\,\,\,\,\,H^1(w_Z)\,\,\,\,\,\,\mapright{\int_Z} \,\,\,\,\,\,\,\,\CC\\
&\downarrow&\,\,\,\,\,\,\,\,\,\,\,\,\,\,\,\,\,\,\,\,\,\,\,\,\,\,\,\,\,\,\,\,\,\,\,\,\,\,\,\,\,\,\,\,\,\,\,\,\,\,\,\,\,\,\,\,\,\,\,\,\,\,\,\,\,\,\,\parallel\\
&Ext^3(\sO_Z, K_{W_0})&\mapright{\backsim}
Hom(\sO_Z,\sO_Z)^{\vee}\mapright{\text{sum}} \,\CC.
\end{eqnarray*}
\end{proof}

 Now since $\beta$ is in $H_{2}(P(L\oplus \sO_C))$ and $\sigma\vert_C=0$ this integral is always zero. So
the map
\begin{equation*}
\delta:\sO_B\mapright{\delta} Ext^2_{\widehat{\pi_1}}(I, I)_0
\end{equation*}
composed with the cosection \,\,$\xi:Ext^2_{\widehat{\pi_1}}(I, I)_0 \rightarrow \sO_B$ \,is zero, and
$\xi$ lifts to 
\begin{equation*}
Ext^2_{\pi_1}(I, I)_0\rightarrow \sO_B.
\end{equation*}
By proposition 2.6 in \cite{Jli} the virtual
cycles $[B_t]^{vir}$ are constant.
\end{proof}

\begin{coro}
Let $S$ be a smooth projective surface with a holomorphic two-form $\sigma$ and the zero loci of $\sigma$ is a
smooth curve $C$. Assume $L=\sO(mC)$ for some $m$ and $X=P(L\oplus \sO_S)$. Let $\beta$ be in
$H_{\ast}(P(L\oplus \sO_C))$ and $\{\gamma_i\}$  Poincar\'e dual of homology insertions
$\check{\gamma_i}\in H_*(X,\RR)$. Then prime fields  DT invariant $\<\tilde{\tau_0}(\gamma_1)\cdots\tilde{\tau}_0(\gamma_r)\>^X_{n,\beta}$ depends only on the following:\\
 (1) genus of $C$ and degree of $L$,\\
 (2) $\chi(\sO_S)\in \ZZ_2$,\\
 (3) homology class (its horizontal and vertical degree) of $\beta$ and $\check{\gamma_i}.$
\end{coro}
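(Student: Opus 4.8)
The plan is to combine the two main structural results of \S5: the prime-field localization, which identifies the global invariant with one computed only over a neighborhood of the canonical curve, and the deformation invariance of Proposition \ref{5.6}, which shows this local invariant is constant in families. First I would invoke the prime-field proposition (together with the Definition--Proposition giving $r_U^\ast([B]^{vir})=[I_n(X,\beta)]^{vir}$) to write
\[
\<\tilde{\tau}_0(\gamma_1)\cdots\tilde{\tau}_0(\gamma_r)\>^X_{n,\beta}
=\<\tilde{\tau}_0(\hat\gamma_1)\cdots\tilde{\tau}_0(\hat\gamma_r)\>^{\pi^{-1}(U),\,loc}_{n,\beta},
\qquad \gamma_i=\mu^\ast\hat\gamma_i,
\]
so that it suffices to prove the right-hand side depends only on the listed data. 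Since $[B]^{vir}$ and all insertions (Poincar\'e duals of classes from $P(L|_C\oplus\sO_C)$) are constructed entirely inside $\pi^{-1}(\hat U)$, the problem is reduced to the local geometry of the $\Po$-bundle over the theta neighborhood $U$ of $C$.

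Next I would identify this local data. Because $\sigma$ is a holomorphic two-form, i.e. a section of $K_S=\Omega^2_S$, its smooth degeneracy locus $C$ is a canonical curve, so $\sO_S(C)=K_S$ and, by adjunction, $K_C=(K_S\otimes\sO_S(C))|_C=(K_S|_C)^{\otimes2}=N_{C/S}^{\otimes2}$. Thus $\theta:=N_{C/S}=K_S|_C$ is a theta characteristic on $C$, and the theta neighborhood $U$ is analytically the disk bundle of $\theta$, carrying its tautological two-form with degeneracy exactly $C$. Since $L=\sO(mC)=K_S^{\otimes m}$ one has $L|_C=\theta^{\otimes m}$, so the whole local model over $U$ --- curve, two-form, and line bundle --- is determined by the triple $(C,\theta,m)$ together with the homology classes of $\beta$ and $\check\gamma_i$. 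In particular $\deg N_{C/S}=C^2=K_S^2=g(C)-1$, so the genus of $C$ records $K_S^2$.

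I would then realize any deformation of $(C,\theta)$ by a family satisfying the hypotheses of \S5.2: given a smooth connected family of spin curves $(C_t,\theta_t)$ with fixed genus and fixed parity, set $S=\mathrm{Tot}(\theta)$ relative over $T$, which is contractible onto the zero section $C=\bigcup_t C_t$ and carries the relative two-form $\sigma\in\Omega^2_{S/T}$ degenerating along each $C_t$; put $L=\sO(mC)$ and let $W$ be its compactification. Applying Proposition \ref{5.6}, the composition $\xi\circ\delta=0$, so the localized classes $[B_t]^{vir}$ are constant in $t$, and hence so are the localized invariants obtained by capping with the operations $ch_2(\hat\gamma_i)$. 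It then remains to check that two triples with equal $g(C)$, equal $m$ (equivalently equal $\deg L$), and equal parity of $\theta$ are connected by such a family: this is the connectedness of the parameter space of theta characteristics of fixed genus and fixed Arf invariant. Finally the parity $h^0(C,\theta)\bmod2$ is identified with $\chi(\sO_S)\bmod2$ via the exact sequence $0\to\sO_S\to K_S\to\theta\to0$ and Serre duality on $S$ (in the regular case $h^0(\theta)=p_g-1$, and $\chi(\sO_S)=1+p_g$, so the two parities agree). Assembling, the localized invariant --- and therefore the DT invariant --- depends only on $g(C)$, $\deg L$, $\chi(\sO_S)\bmod2$, and the homology classes, which is the assertion.

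The main obstacle I expect is not the formal localization but the geometric realization and classification in the last step: verifying that an abstract deformation of the spin curve $(C,\theta)$ genuinely fits the \S5.2 framework (a contractible total family $S/T$ with a relative two-form whose fiberwise degeneracy is exactly $C_t$ and with $L=\sO(mC)$), and establishing --- or citing from the theory of spin curves --- the connectedness of the moduli of theta characteristics with prescribed genus and parity. Pinning down the precise dictionary between $m$ and $\deg L$, and between the Arf invariant of $\theta$ and $\chi(\sO_S)\bmod2$, is a finite check but is exactly where the dependence on these particular invariants, and no others, is earned.
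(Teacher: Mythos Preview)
Your proposal is correct and follows essentially the same route as the paper: reduce the global invariant to the localized one over the theta neighborhood via Proposition~5.3, then invoke Proposition~\ref{5.6} to make the localized invariant constant along a family of spin curves, and finally appeal to the classification of theta characteristics by genus and parity (identified with $\chi(\sO_S)\bmod 2$). The only organizational difference is that the paper phrases the first reduction as a standard degeneration of $X_i$ to the normal bundle of $P(L_i|_{C_i}\oplus\sO_{C_i})$ and applies Proposition~\ref{5.6} to that family as well (so two applications of \ref{5.6} rather than one), whereas you use the analytic identification of $U$ with the disk bundle of $\theta$ directly; both reach the same local model. Your added computations ($N_{C/S}=K_S|_C$, $g(C)-1=K_S^2$, and the parity check via $0\to\sO_S\to K_S\to\theta\to0$) are correct in the regular case and the general identification is the one the paper defers to \cite{tom},\cite{Jli}.
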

\begin{proof}
Suppose there are two surfaces $S_1$ and $S_2$ with all the assumptions. One applies the standard degeneration to
$X_i=P(L_i\oplus \sO_{S_i})$ to the normal bundle of $P(L_i\oplus \sO_{C_i})$ in $X_i$. For the
family the localized DT invariant $\<\tilde{\tau}_{0}(\gamma_1)\ldots
\tilde{\tau}_{0}(\gamma_{r})\>_{n,\beta}^{\pi^{-1}(U\vert_t), loc}$ are constant by proposition \ref{5.6}. Hence one
change the problem into comparing the localized invariants for theta line bundle in each case. Since genus of $C$
and  $\chi(\sO_S))\in Z_2$ are the complete invariants of the deformation class of $theta$ over $C_i$,
with data (1) and (2) the two localized problem has targets three-fold deformation invariant (two-forms extends
to families). By proposition \ref{5.6} again the two localized invariants are the same.
\end{proof}

\noindent{\textbf{Remark}:}
In \cite{Jli} it is shown the Gromov-Witten invariant of a $p_g>0$ surface is completely decided by $g_C$ and $\chi(\sO_S)\in \ZZ_2$ (the theta neighborhood of a canonical curve $C$). By the reduction formula in \cite{TGW} the GW invariant of $P^1$ scroll can be shown to depend only on (1)$g_C$ and $deg\,\,L$, (2)$\chi(\sO_S)\in \ZZ_2$, and (3)$\beta$ and $\check{\gamma_i}$ as in the corollary.  Since the Donaldson-Thomas invariant is defined for three-folds as an analogue of the Donaldson invariant for surfaces, one would expect the Donaldson invariant for bundles of any rank $r$ over a $p_g>0$ surface also depends on $r$, $g_C$ and  $\chi(\sO_S)\in \ZZ_2$ of the surfaces. Another evidence for this besides MNOP conjecture is work of C. Taubes, P. Feenhan and T. Leness that connect GW theory to Seiberg-Witten theory and then to Donaldson theory for certain surfaces.

\section {\bf $g=0$ case}

Now let $M$ be a projective $K3$ surface and $p$ a point on $X$. Consider $S=Bl_pM$ and let the exceptional divisor be $E$. Take $L=\sO(dE)$ on $S$, and we would consider the problem of computing Donaldson Thomas invariants of the three-fold $X=P(\sO\oplus \sO(dE))$ where the curve class is taken to be $\beta=nE$ and n is a nonnegative integer.\\

  \noindent{\bf Example:} {\sl 
Assume $\gamma_{i}\in H_{*}(P(\sO\oplus L\vert_{E}))$,
then the DT partition function of $X$: $Z_{DT}(X,
\prod^{r}_{i=1}\tau_{q_i}(\gamma_{i}))_{\beta}$ is a multiple of the
full Donaldson-Thomas invariants of a toric three-fold. The
three-fold is a $\Po$ bundle over a surface which is blown up of
$P^2$ at one point. So by the virtual localization method one can
derive the partition function of $X$ with arbitrary descendants $\tau_{q_i}(\gamma_{i})$.}\\

Here the case for all other $\gamma_{i}$ can also be computed but the result is slightly more complicated.
The rough algorithm is to degenerate $M$ into a normal crossing of
two rational surfaces glued along a common smooth elliptic curve, and then use
degeneration formula for the corresponding three-fold. The computation of the two relative invariants
can then be reduced to that of absolute ones via standard degeneration.\\

 This is actually an example of the program raised by Raoul Pandharipande and Marc Levine \cite{AC}, namely degenerating the three-fold to toric case and then use degeneration formula to reduce the possible problem to toric DT partitions functions.\\

 Consider a degeneration of the $K3$ surface $M$ to a normal crossing of two rational surfaces, $M1$ and $M2$, where $M_1=P^2$ and $M_2$ is $P^2$ blown up at $18$ points on a smooth elliptic curve $E$. By \cite{Friedman} such a degeneration exists is $N_{E/M_1}\otimes N_{E/M_2}$ is trivial, which can be made by suitable choice of the $18$ points on $E$. One can assume the point $p$ varies (holomorphically) to a point $p_0$ on $M_1$. Let the exceptional curve of blowing up $M_1$ at $p_0$ be $C$, then the line bundle $L$ is also degenerated to the line bundle $L_0=\sO_{S_0}(dC)$ on $S_0$. Note here $L_0$ is $\sO(dC)$ on $S_1=Bl_{p_0}M_1$ and is trivial on $S_2=M_2$. Also denote the compactification of $L$ by $X_0$, and let $X_1=P_{S_1}(\sO\oplus \sO(E_0))$, $X_2=P_{S_2}(\sO\oplus \sO)$. So $X$ degenerates to $X_1\cup X_2$ with normal crossing along $D=E \times \Po$.\\

 Let the normal bundle of $E$ in $M_1$ be $N$, also denote the pull back of $N$ on $D$ by $N$. The compactification of $N$ over $D$ is
 $P_D(\sO\oplus N)$.\\

By the degeneration formula (also \cite{AC}), the DT partition
function of $X$ is a combination of that of $X_1/D$ and $X_2/D$:
\begin{eqnarray*}
Z^{\prime}_{DT}(X, \prod^{r}_{i=1}\tau_{q_i}(\gamma_{i}))_{\beta}=
\sum_{ \beta=\beta_1+\beta_2, \eta} Z^{\prime}_{DT}(X_1/D, \prod_{i\in A}
\tau_{q_{i}} (\gamma_{i}))_{\beta_1,\eta} \cdot \\
\frac{(-1)^{\mid \eta\mid -\textit{l}(\eta)}\vartheta({\eta})}{q^{\mid \eta \mid}}.
Z^{\prime}_{DT}(X_2/D,\prod_{i\in B}\tau_{q_{i}} (\gamma_{i}))_{\beta_2,\eta^{\vee}},
\end{eqnarray*}
where $A\cup B=\{1,2,3,,,r\}$ is fixed, $\vert\eta\vert=\beta_1 \cdot [D]=\beta_2 \cdot [D]$, and  $\eta, \beta_1, \beta_2$ run over all possibilities. The constants $\vartheta({\eta})$ comes from 
\begin{eqnarray*}
[\triangle]=\sum_{\vert\eta\vert=k}(-1)^{k-\textit{l}(\eta)}\vartheta(\eta)C_{\eta}\otimes C_{\eta^{\vee}} \,\,\,\,  \in  H^{\ast}(Hilb(S,k)\times Hilb(S,k), Q).
\end{eqnarray*}
  Since every $\gamma_{i}$ is in $ H_{*}(P(\sO\oplus L\vert_{E}))$ one can fix $A$ to be all insertions and $B$ to be the empty insertion.  From previous section, we can assume $[\beta]=m[C]+n[F]$ where $[F]$ is the fiber class. From the fact that the horizontal curve class does not move out the only possibilities of $\beta_1, \beta_2$ is $\beta_1=\beta_h+n_1[F],\beta_2=n_2[F] (n_1+n_2=n)$.  In this case the intersections $\vert\eta\vert=\beta_1 \cdot [D]=\beta_2 \cdot [D]$ is always zero, so the degeneration formula becomes
\begin{eqnarray*}
Z^{\prime}_{DT}(X, \prod^{r}_{i=1}\tau_{q_i}(\gamma_{i}))_{\beta}=
\sum_{n_1} Z^{\prime}_{DT}(X_1/D, \prod_{i}
\tau_{q_{i}} (\gamma_{i}))_{\beta_1} \cdot
Z^{\prime}_{DT}(X_2/D,1)_{\beta_2}.
\end{eqnarray*}

 Generally for a $\Po$ scroll $X$ over a surface and a curve class $[\beta]=m[\beta_h]+n[F]$ where $[\beta]$ is the horizontal components and $[F]$ is the fiber, the virtual dimension of the moduli space $I_n(X,\beta)$ and $I_n(X/D,\beta)$ are the same as :
 \begin{eqnarray*}
 \int_{\beta} c_1(X)=m\int_{\beta_h} c_1(X) + n\int_F c_1(X)=m\<\beta_h, c_1(X)\> + 2n
 \end{eqnarray*}

 Suppose the cohomology classes $\gamma_{i}$ are from $H^{2d_i}(X,Z)$,  then $Z^{\prime}_{DT}(X, \prod^{r}_{i=1}\tau_{q_i}(\gamma_{i}))_{\beta}$ is nonzero only when 
$$v.d.=m\<\beta_h, c_1(X)\>+2n=\sum^r_{i=1} (q_i-1+d_i).$$ Similarly the relative  $Z^{\prime}_{DT}(X_1/D, \prod_{i}
\tau_{q_{i}} (\gamma_{i}))_{\beta_1} $ is nonzero only when  $v.d.=m\<\beta_h, c_1(X_1)\>_{X_1}+2n_1=\sum^{r}_{i=1} (q_i-1+d_i)$. The only case for them to hold simultaneously is $n_1=n$. So
\begin{eqnarray*}
Z^{\prime}_{DT}(X, \prod^{r}_{i=1}\tau_{q_i}(\gamma_{i}))_{\beta}=
 Z^{\prime}_{DT}(X_1/D, \prod_{i=1}^{r}
\tau_{q_{i}} (\gamma_{i}))_{\beta_1=\beta} \cdot
Z^{\prime}_{DT}(X_2/D,1)_{\beta_2=0},
\end{eqnarray*}

The first relative invariants can be related to absolute invariants by standard degeneration( deformation to normal cone ):

\begin{eqnarray*}
Z^{\prime}_{DT}(X_1, \prod_{i=1}^{r} \tau_{q_{i}}
(\gamma_{i}))_{\beta}&=& Z^{\prime}_{DT}(X_1/D, \prod_{i=1}^{r}
\tau_{q_i}(\gamma_{i}))_{\beta} \cdot
Z^{\prime}_{DT}(P_D(\sO\oplus N)/N^{\infty})_{0}\\
&=&Z^{\prime}_{DT}(X_1/D, \prod_{i=1}^{r}
\tau_{q_i}(\gamma_{i}))_{\beta}
\end{eqnarray*}

Therefore it is reduced to the computation of $Z^{\prime}_{DT}(X_1,
\prod_{i\in A} \tau_{q_{i}} (\gamma_{i}))_{\beta}$. Here $X_1$
 is a compactification of the line bundle $\sO(dE)$ on a surface
 which is $P^2$ blown up at one point. For this toric case the partition
 does not vanish and the third MNOP conjecture could be checked to hold
 \cite{MOP}. So for the original three-fold $X$ which is a $\Po$ scroll
 over $BL_p(K3)$ the third MNOP conjecture is also true.

\end{document}